\newtheorem{proposition}{Proposition}
\newtheorem{lemma}{Lemma}
\newtheorem{remark}{Remark}
\newenvironment{proof}{\noindent{\bf Proof: }}{\hfill\fbox{}\vspace*{3mm}}
\def\b{\bf}
\def\bu{\vrule height .9ex width .8ex depth -.1ex \ }
\def\ga{\gamma}
\def\la{\lambda}
\def\th{\theta}
\def\dfrac{\displaystyle \frac}
\def\dsum{\displaystyle \sum}
\def\b0{{\bf{0}}}
\def\ba{{\bf{a}}}
\def\bb{{\bf{b}}}
\def\bh{{\bf{h}}}
\def\bu{{\bf{u}}}
\def\bx{{\bf{x}}}
\def\by{{\bf{y}}}
\def\bA{{A}}
\def\bD{{D}}
\def\bE{{E}}
\def\bI{{I}}
\def\={\!\!\!=\!\!\!}
\def\oth{\overline{\theta}}
\begin{document}
\title{\textbf{A continuous time multi-echelon inventory model for deteriorating items with transshipment}}
\author{}
\date{}
\maketitle
\begin{abstract}
In this paper, we propose a continuous time model for a multi-echelon inventory system. 
Items in the inventory are deteriorating.
Lateral transshipment are allowed among the warehouses in the same echelon where transshipment rate depends on
the inventory level of the corresponding warehouses. 
We give some sufficient conditions for the equilibrium points of the system to be stable.
By aggregating the warehouses in an echelon, a fast procedure is developed for finding the equilibrium values of inventory level at each warehouse.

\end{abstract}

\section{Introduction}

In a multi-echelon inventory system, inventory may be shared among warehouses in the same echelon in order to prevent shortage. Such transportation of inventory is called {\it lateral transshipment}. 
By lateral transshipment, operating cost of the inventory system can be reduced \cite{Robinson}.
A number of research works have been published in this area.
Diks and de Kok \cite{Diks} considered a two echelon network with transshipments and proposed an optimal rebalancing policy at the retailer level to maintain all inventory at each retailer at a balanced position.
Tagaras \cite{Tagaras} investigated the effects of pooling the inventory on minimizing the operation costs as well as optimizing the service levels.
Hochmuth and K\"{o}chel \cite{Hochmuth} used simulation approach to determine the order size and transshipment levels in a multi-location inventory system. 

In what follows, we present a brief review on lateral transshipment based on the review paper by Paterson et al. \cite{Paterson}.
There are two main strands of lateral transshipment in literature: proactive transshipment and reactive transshipment.
In proactive transshipment models, stock is redistributed through lateral transshipment among all stocking points in an echelon at predetermined moments in time. 
This policy is useful for high transshipment handling costs since transshipments are arranged in advance. 
While in reactive transshipment models, transshipment occurs from one stock point that has sufficient stock to a stock point that faces a stock-out (or at risk of stock-out).
This policy is suitable when the transshipment costs are low comparing to the holding costs.
As pointed out in \cite{Paterson}, there is no research work considering continuous time model on proactive lateral transshipment.
In this paper, we propose a continuous time model for a multi-echelon inventory system in which redistribution of inventory among warehouses in the same echelon takes place continuously.
We also integrate reactive lateral transshipment into the proposed model. 
The transshipment rate between any two warehouses in the same echelon is set to be depended on the inventory levels at those two warehouses.
This policy helps to reduce the shortage situation.

Product deterioration occurs in many inventory system such as medicine, food and electronic products.
Deteriorating inventory models have been studied widely in the past few decades.
Recently, there are some research works focus on inventory model for deteriorating items in multi-echelon supply chain.
Rau et al. \cite{Rau} proposed a model for deteriorating inventory on optimizing the joint total cost among the supplier, producer and buyer. 
Wang et al. \cite{Wang} proposed a coordination mechanism to determine the timing and quantities of deliveries
in cooperation with up-/down-stream members in the supply chain.
In this paper, we assume that the amount of items deteriorated depends on the current inventory level at the warehouses.
Such setting in commonly adopted in models for deteriorating items, for example \cite{Mak}.
We also assume that the quantities of deliveries from upstream to downstream depend on the inventory levels of the two echelons in the product transportation.

The remainder of the paper is organized as follows. In Section 2, we present one basic model and one aggregated model for the warehouses in one particular echelon of the system.
Three special cases are discussed and numerical examples are given.
In Section 3, we present a model for a multi-echelon inventory system, in which all warehouses in the same echelon are considered as one aggregated warehouse.
We develop a procedure for finding the inventory level in equilibrium of a warehouse a particular echelon based on Newton's method. 
The paper is concluded in Section 4 to address further research issues. 

\section{The one-echelon case}
In this section, we first present a basic model for the warehouses in one particular echelon of the system.
The model is also applicable in modeling a multi-location inventory system with lateral transshipment.
Suppose that there are $n\ge 1$ warehouses in the echelon we are considering.
The following notations for each warehouse $i$ and $j$ $(1 \leq i,j \leq n)$ and time $t\geq 0$ are used in this section:

\begin{tabular}{ll}
$L_i$ & maximum inventory level\\
$y_i(t)$ & inventory level $(0 \leq y_i(t)\leq L_i)$\\
$\mu_i$ & maximum supply rate\\
$\th_i$ & percentage of items deteriorated per unit time\\
$\la_i$ & demand rate\\
$\ga_{ij}$ & maximum transshipment rate from warehouse $i$ to warehouse $j$ ($\ga_{ii}=0$)\\
\end{tabular}

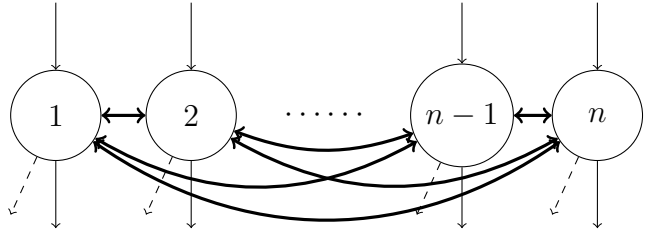
\begin{figure} 
\centering 
\begin{tikzpicture} [domain=0:20,scale=0.6] 

\node[minimum size=12mm,draw,circle] (y1) at (0,0) {$1$};
\node[minimum size=12mm,draw,circle] (y2) at (3,0) {$2$};
\node at (6,0) {$\cdots \cdots$};
\node[minimum size=12mm,draw,circle] (y3) at (9,0) {$n-1$};
\node[minimum size=12mm,draw,circle] (y4) at (12,0) {$n$};

\draw[<->,very thick] (y1) -- (y2);
\draw[<->,very thick] (y3) -- (y4);
\draw[<->,very thick] (y1) edge [bend right=30] (y3);
\draw[<->,very thick] (y1) edge [bend right=35] (y4);
\draw[<->,very thick] (y2) edge [bend right=20] (y3);
\draw[<->,very thick] (y2) edge [bend right=30] (y4);

\draw[->] (y1) -- (0, -2.5);
\draw[->] (y2) -- (3, -2.5);
\draw[->] (y3) -- (9, -2.5);
\draw[->] (y4) -- (12, -2.5);

\draw[->] (0, 2.5) -- (y1);
\draw[->] (3, 2.5) -- (y2);
\draw[->] (9, 2.5) -- (y3);
\draw[->] (12, 2.5) -- (y4);

\draw[dashed,->] (y1) -- (-1, -2.2);
\draw[dashed,->] (y2) -- (2, -2.2);
\draw[dashed,->] (y3) -- (8, -2.2);
\draw[dashed,->] (y4) -- (11, -2.2);

\end{tikzpicture} 
\caption{An echelon of the inventory system.}
\end{figure} 

\bigskip
An echelon of the system is graphically illustrated in Figure 1.
The solid line arrows represent the supply and demand at the warehouses and the dashed line arrows represent the deteriorated items screened out from the warehouses.
Each bold arrow represents a possible route of transshipment.  
The supply rate to the warehouse $i$ depends on the inventory level.
If the inventory level is close to the maximum level $L_i$ then the supply rate should be low.
On the other hand, if the inventory level is close to zero then the supply rate should be high.
Hence, the supply rate is given by
$$
\mu_i\dfrac{(L_i-y_i(t))}{L_i}.
$$
Similarly, the lateral transshipment rate from warehouse $i$ to warehouse $j$ depends on its inventory levels at $i$ and $j$.
The lateral transshipment rate should be high if the inventory level at $i$ is high and the inventory level at $j$ is low.
Hence, transshipment rate from $i$ to $j$ is given by
$$
\ga_{ij}\dfrac{y_i(t)}{L_i}\dfrac{(L_j-y_j(t))}{L_j}.
$$
The deterioration rate is proportional to the inventory level on hand, i.e.
$$\th_i y_i(t).$$
In this paper, we assume that the items are screened out immediately from the inventory once the items deteriorate.

\subsection{The basic model}
For simplicity of discussion, we assume that the maximum transshipment rate from warehouse $i$ to warehouse $j$ is the same as that from warehouse $j$ to warehouse $i$, i.e. $\ga_{ij}=\ga_{ji}$ $(i,j=1,\cdots,n)$.
The analysis can be easily extended to the case where this assumption does not hold.
The rate of change of the inventory level at warehouse $i$ at time $t$ is
the sum of the supply rate and the total transshipment rate from other warehouses to $i$, subtracting the 
demand rate, the deterioration rate and the total transshipment rate from $i$ to other warehouses:
$$
\begin{array}{rcl}
\dfrac{dy_i(t)}{dt} &=& \mu_i \dfrac{(L_i-y_i(t))}{L_i}+\dsum_{j=1}^n \ga_{ij}\dfrac{y_j(t)}{L_j}\dfrac{(L_i-y_i(t))}{L_i}  -\la_i - \th_i y_i(t)
\\
&&-\dsum_{j=1}^n \ga_{ij}\dfrac{y_i(t)}{L_i}\dfrac{(L_j-y_j(t))}{L_j}\\
&=&(\mu_i-\la_i)-\Big( \dfrac{\mu_i}{L_i}+\th_i+ \dsum_{j=1}^n \dfrac{\ga_{ij}}{L_i}\Big)y_i(t)+\dsum_{j=1}^n \dfrac{\ga_{ij}}{L_j}y_j(t).
\end{array}
$$

Define the inventory level vector at time $t$ by $\by(t)=(y_1(t),y_2(t),\cdots,y_n(t))^t$.
Rewriting the system of $n$ linear differential equations in matrix form we have:
\begin{equation}\label{nlwh}
\by'=\bA \by+\bb,
\end{equation}
where
$$
\bA=
\begin{pmatrix}
-\Big( \dfrac{\mu_1}{L_1}+\th_1+ \dsum_{j=1}^n \dfrac{\ga_{1j}}{L_1}\Big) & \dfrac{\ga_{12}}{L_2} & \cdots & \dfrac{\ga_{1n}}{L_n} \cr
\dfrac{\ga_{21}}{L_1} & \ddots & & \vdots \cr
\vdots & & \ddots & \vdots \cr
\dfrac{\ga_{n1}}{L_1} & \cdots & & -\Big( \dfrac{\mu_n}{L_n}+\th_n+ \dsum_{j=1}^n \dfrac{\ga_{nj}}{L_n}\Big)\cr
\end{pmatrix}
\mbox{ and }
\bb=
\begin{pmatrix}
\mu_1-\la_1 \cr
\mu_2-\la_2 \cr
\vdots \cr
\mu_n-\la_n
\end{pmatrix}.
$$
The following lemma is needed in the proofs of several propositions.
\begin{lemma}
The real part of each of the eigenvalues of $\bA$ is negative.
\end{lemma}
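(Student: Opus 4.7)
The plan is to apply the Gershgorin circle theorem to the transpose $\bA^t$ rather than to $\bA$ directly; since $\bA$ and $\bA^t$ share the same eigenvalues, it suffices to show that every Gershgorin disc of $\bA^t$ lies strictly in the open left half plane. The reason one transposes first is that the off‑diagonal entry $\ga_{ij}/L_j$ in row $i$ of $\bA$ has denominator $L_j$, which does not combine cleanly with the diagonal entry whose cross terms carry denominator $L_i$. After transposition, the $(i,j)$‑entry for $i\ne j$ becomes $\ga_{ji}/L_i$, which by the symmetry assumption $\ga_{ij}=\ga_{ji}$ equals $\ga_{ij}/L_i$, so every off‑diagonal entry in row $i$ of $\bA^t$ shares the common denominator $L_i$ with the $\ga_{ij}/L_i$ terms on the diagonal.

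The first step is to write down the $i$‑th Gershgorin disc of $\bA^t$: it is centred at
$$-\Big(\dfrac{\mu_i}{L_i}+\th_i+\dsum_{j=1}^n\dfrac{\ga_{ij}}{L_i}\Big)$$
with radius $\dsum_{j\ne i}\dfrac{\ga_{ij}}{L_i}$. Because $\ga_{ii}=0$, the full sum $\dsum_{j=1}^n \ga_{ij}/L_i$ equals the radius, so the two transshipment sums cancel. The second step is to note that the rightmost real point of this disc is therefore exactly $-\mu_i/L_i-\th_i$, which is strictly negative under the standing positivity of $\mu_i$, $L_i$ and $\th_i$. Hence every disc sits strictly inside $\{z\in\mathbb{C}:\mathrm{Re}\,z<0\}$, which by Gershgorin gives the claim.

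The main obstacle, as indicated above, is recognising that Gershgorin applied to $\bA$ itself does not close: the row‑$i$ off‑diagonal mass $\sum_{j\ne i}\ga_{ij}/L_j$ need not equal the transshipment mass $\sum_j\ga_{ij}/L_i$ appearing in the diagonal, so the cancellation that yields $-\mu_i/L_i-\th_i$ fails. The symmetry assumption $\ga_{ij}=\ga_{ji}$ is precisely what allows us to swap to $\bA^t$ and recover a diagonally dominant structure; equivalently, one could perform the diagonal similarity $\bA\mapsto D\bA D^{-1}$ with $D=\diag(L_1,\ldots,L_n)$, which rescales the off‑diagonals of $\bA^t$ into the same form. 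Either way the argument is short once the correct representation is in hand, and no spectral computation beyond Gershgorin is required.
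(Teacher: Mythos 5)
Your proof is correct and is essentially the paper's own argument: you apply the Gershgorin circle theorem to $\bA^t$, where each disc is centred at $-\big(\mu_i/L_i+\th_i+\sum_{j}\ga_{ij}/L_i\big)$ with radius $\sum_{j}\ga_{ij}/L_i$, so its rightmost point is $-(\mu_i/L_i+\th_i)<0$ and every eigenvalue has negative real part. (Only your parenthetical alternative is slightly off: the diagonal similarity that puts the row-$i$ off-diagonals over the common denominator $L_i$ is $D^{-1}\bA D$ with $D=\diag(L_1,\ldots,L_n)$, not $D\bA D^{-1}$; this does not affect your main argument.)
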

\begin{proof}
By applying the Gershgorin Circle Theorem \cite[p. 357]{Golub} to $\bA^t$, the real part of each of its eigenvalues lies in the interval
$$
\bigcup_{i=1}^n \left( -\Big( \dfrac{\mu_i}{L_i}+\th_i+ 2\dsum_{j=1}^n \dfrac{\ga_{ij}}{L_i}\Big), 
-\Big( \dfrac{\mu_i}{L_i}+\th_i\Big)\right) \subset (-\infty,0).
$$ 
Therefore, the real part of each of the eigenvalues of $\bA$ is negative. 
\end{proof}

From Lemma 1, it is easy to see that $A$ is non-singular.
Hence, the solution of system (\ref{nlwh}) can be obtained by the following proposition.
\begin{proposition}
The solution of system (\ref{nlwh}) is
$$
\by(t)=e^{\bA t} \by(0)+\bA^{-1}(e^{\bA t}-I) \bb,
$$
where $e^{\bA t}=\sum_{n=0}^\infty \bA^n t^n/n!$.
Furthermore, if $y_i(0)>0$ and $\mu_i>\la_i$, $(i=1,\cdots,n)$,
then $\by(t)>0$ for all $t>0$ and $\by^*>0.$
\end{proposition}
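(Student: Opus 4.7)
The plan has two parts, which I would handle separately.

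For the closed-form solution, I would use the standard integrating-factor (variation-of-constants) calculation. By Lemma~1, $0$ is not an eigenvalue of $\bA$, so $\bA^{-1}$ exists and commutes with $e^{\bA t}$. Multiplying $\by' - \bA\by = \bb$ by $e^{-\bA t}$ turns the left-hand side into $(e^{-\bA t}\by(t))'$; integrating from $0$ to $t$ and using
$$
\int_0^t e^{-\bA s}\,ds = \bA^{-1}(I - e^{-\bA t}),
$$
then left-multiplying by $e^{\bA t}$, produces the stated expression. This step is textbook.

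For the strict positivity of $\by(t)$ on $(0,\infty)$ I would argue by a first-hitting-time contradiction. Define
$$
t_0 = \inf\{\,t > 0 : y_i(t) = 0 \text{ for some } i\,\}.
$$
Since each $y_i(0) > 0$, continuity gives $t_0 > 0$. Suppose $t_0 < \infty$; then some index $i$ satisfies $y_i(t_0) = 0$ while $y_j(t_0) \geq 0$ for all $j$, and because $y_i(t) > 0$ for $t < t_0$, the left derivative obeys $y_i'(t_0) \leq 0$. However, reading the $i$-th equation of (\ref{nlwh}) in its original componentwise form at $t_0$ gives
$$
y_i'(t_0) = (\mu_i - \la_i) + \dsum_{j=1}^n \dfrac{\ga_{ij}}{L_j}\, y_j(t_0) \geq \mu_i - \la_i > 0,
$$
since the $\th_i y_i(t_0)$ term and the transshipment-out terms $(\ga_{ij}/L_i)\,y_i(t_0)$ both vanish at $y_i(t_0) = 0$. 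This contradicts $y_i'(t_0) \leq 0$, so $t_0 = \infty$ and $\by(t) > 0$ for all $t > 0$.

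For $\by^* > 0$ I would pass to the limit in the solution formula. Lemma~1 forces $e^{\bA t} \to 0$ as $t \to \infty$, so $\by(t) \to -\bA^{-1}\bb =: \by^*$, and the componentwise limit in $\by(t) > 0$ gives $\by^* \geq 0$. If some $y_i^* = 0$, the equilibrium condition $\bA\by^* + \bb = 0$ read in coordinate $i$ becomes $(\mu_i - \la_i) + \sum_j (\ga_{ij}/L_j)\, y_j^* = 0$, which is strictly positive---a contradiction. Hence $\by^* > 0$. The main obstacle is the positivity claim rather than the formula: the underlying point is that $\bA$ is a Metzler matrix, so the positive orthant is forward-invariant under the inhomogeneous flow, but I prefer to extract this directly from the componentwise ODE at a first potential zero rather than invoke a semigroup theorem.
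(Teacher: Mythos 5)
Your argument is correct and complete, but it is genuinely different from what the paper does: the paper offers no proof at all for this proposition, simply citing Arrow's note on a ``dynamic'' proof of the Perron--Frobenius theorem for Metzler matrices. The structural fact that citation encodes is exactly the one you isolate at the end of your proposal --- the off-diagonal entries of $\bA$ are nonnegative (i.e.\ $\bA$ is Metzler), so the positive orthant is forward-invariant for $\by'=\bA\by+\bb$ when $\bb>0$. Your first-hitting-time argument makes this self-contained: at the first time $t_0$ some component $y_i$ vanishes, the diagonal (deterioration, supply-saturation, outgoing-transshipment) terms all carry the factor $y_i(t_0)=0$, the incoming-transshipment terms are nonnegative because the other components are still nonnegative, and $\mu_i-\la_i>0$ forces $y_i'(t_0)>0$, contradicting $y_i'(t_0)\le 0$; the strict positivity of $\by^*$ then follows either by the same coordinate reading of $\bA\by^*+\bb=0$ or from the limit $t\to\infty$ using Lemma~1. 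The variation-of-constants derivation of the closed form is the standard one and matches the stated formula. What your route buys is a proof the reader can check inside the paper, tied explicitly to the sign structure of $\bA$ and to the hypotheses $y_i(0)>0$, $\mu_i>\la_i$; what the citation buys is brevity and a pointer to the general Metzler-matrix theory. The only cosmetic point worth tightening is the extraction of the index $i$ with $y_i(t_0)=0$ from the infimum (pass to a subsequence with a constant index and use continuity), which is routine.
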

\begin{proof}
See \cite{Arrow}.
\end{proof}

We remark that when $n$ is large, the exponential of $\bA$ is difficult to compute in general.
Another quantity of interest is the equilibrium value of inventory level at each warehouse.
In equilibrium, we have $\b0=\bA \by^*+\bb$, i.e.,
$$
\by^*=-\bA^{-1} \bb.
$$
To classify the equilibrium point, one may follow the analysis in \cite{HubbardWest}, \cite[p. 261]{Sachdev}:
\begin{proposition}
The equilibrium point $\by^*$ is a stable one.
\end{proposition}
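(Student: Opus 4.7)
The plan is to reduce the question to the homogeneous linear system obtained by shifting coordinates to the equilibrium, and then to invoke Lemma~1 together with the standard spectral criterion for stability of linear ODEs.

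First I would introduce the deviation $\bu(t)=\by(t)-\by^*$. Since $\by^*=-\bA^{-1}\bb$ satisfies $\bA\by^*+\bb=\b0$, a direct differentiation shows that $\bu$ solves the homogeneous system $\bu'=\bA\bu$, whose solution is $\bu(t)=e^{\bA t}\bu(0)$. Therefore the stability of $\by^*$ for (\ref{nlwh}) is equivalent to the asymptotic stability of the origin for $\bu'=\bA\bu$, which in turn is governed entirely by the spectrum of $\bA$.

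Next I would appeal to Lemma~1: every eigenvalue of $\bA$ has strictly negative real part. Using the Jordan decomposition $\bA=PJP^{-1}$, one sees that each entry of $e^{\bA t}$ is a finite sum of terms of the form $t^k e^{\la t}$ with $\mathrm{Re}(\la)<0$, so $\|e^{\bA t}\|\to 0$ as $t\to\infty$. Consequently $\bu(t)\to\b0$, i.e. $\by(t)\to\by^*$, and moreover a standard estimate $\|e^{\bA t}\|\le C e^{-\si t}$ for some $\si>0$ and $C>0$ gives the $\ep$--$\de$ Lyapunov stability bound: for any $\ep>0$, choosing $\|\by(0)-\by^*\|<\ep/C$ forces $\|\by(t)-\by^*\|<\ep$ for all $t\ge 0$. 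Hence $\by^*$ is (asymptotically) stable.

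There is essentially no hard step here; the whole argument is a routine application of the Hurwitz-type criterion for linear systems once Lemma~1 is in hand. The only point that deserves a brief word of care is the passage from ``all eigenvalues have negative real part'' to ``$\|e^{\bA t}\|$ decays exponentially,'' which is why I would mention the Jordan form explicitly rather than simply asserting the decay. Everything else is bookkeeping with the change of variables $\bu=\by-\by^*$.
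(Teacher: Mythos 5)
Your argument is correct and is essentially the paper's own proof: the paper simply notes that, by Lemma~1, every eigenvalue of $\bA$ has negative real part and cites the standard stability criterion for linear systems, which is exactly the content you spell out via the shift $\bu=\by-\by^*$ and the Jordan-form decay estimate for $e^{\bA t}$. Your version just makes the routine details explicit.
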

The proof follows from the fact that, by Lemma 1, the real part of the eigenvalues of $\bA$ is negative \cite{HubbardWest}.

\subsection{Special cases}
In this subsection, we give three special cases of the above model.

\noindent(1) \underline{$n=2:$} Suppose that there are two warehouses in the echelon. 
For simplicity, denote $\ga=\ga_{12}=\ga_{21}$. The matrix $\bA$ is now
	$$\bA=
	\begin{pmatrix}
-\Big( \dfrac{\mu_1}{L_1}+\th_1+ \dfrac{\ga}{L_1}\Big) & \dfrac{\ga}{L_2} \cr
\dfrac{\ga}{L_1} & -\Big( \dfrac{\mu_2}{L_2}+\th_2+  \dfrac{\ga}{L_2}\Big)\cr
\end{pmatrix}.
	$$
	The determinant of $\bA$ is given by
	$$
	\det(\bA)=\Big( \dfrac{\mu_1}{L_1}+\th_1\Big)\Big( \dfrac{\mu_2}{L_2}+\th_2\Big)+\dfrac{\ga}{L_1}\Big( \dfrac{\mu_2}{L_2}+\th_2\Big)+\dfrac{\ga}{L_2}\Big( \dfrac{\mu_1}{L_1}+\th_1\Big)>0,
	$$
	and the exponential of $\bA t$ is given by
	$$
	e^{\bA t}=e^{\eta_1 t} \bI+ \dfrac{e^{\eta_1 t}-e^{\eta_2 t}}{\eta_1-\eta_2}(\bA-\eta_1 \bI),
	$$
		where 
$$\eta_1=\dfrac{-T+\sqrt{T^2-4\det(\bA)}}{2}, \eta_2=\dfrac{-T-\sqrt{T^2-4\det(\bA)}}{2} \mbox{ and } T=\dsum_{i=1}^2	\Big(\dfrac{\mu_i}{L_i}+\th_i+\dfrac{\ga}{L_i}\Big).
$$

Hence, the solution of the system is
	$$
	\by(t)=e^{\bA t}\by(0)
		+\dfrac{1}{\det(\bA)}\begin{pmatrix}
 \dfrac{\mu_2}{L_2}+\th_2+ \dfrac{\ga}{L_2} & \dfrac{\ga}{L_2} \cr
\dfrac{\ga}{L_1} &  \dfrac{\mu_1}{L_1}+\th_1+  \dfrac{\ga}{L_1}\cr
\end{pmatrix}
(I-e^{\bA t})
\begin{pmatrix}
\mu_1-\la_1 \cr
\mu_2-\la_2\cr
\end{pmatrix},
	$$
and the equilibrium point is
$$
\by^*=\dfrac{1}{\det(\bA)}\begin{pmatrix}
 \dfrac{\mu_2}{L_2}+\th_2+ \dfrac{\ga}{L_2} & \dfrac{\ga}{L_2} \cr
\dfrac{\ga}{L_1} &  \dfrac{\mu_1}{L_1}+\th_1+  \dfrac{\ga}{L_1}\cr
\end{pmatrix}
\begin{pmatrix}
\mu_1-\la_1 \cr
\mu_2-\la_2\cr
\end{pmatrix}.
$$

\noindent(2) \underline{A star network:} Suppose that the warehouses form a star network, see Figure 2.
(For simplicity, we only show the transshipment arrows in the figure and skip the other arrows.)
Then the matrix $\bA$ is given by $\bA=\bD+\bE$ where $\bD$ is a diagonal matrix:
$$
D_{ii}=\left\{
\begin{array}{ll}
-\Big( \dfrac{\mu_1}{L_1}+\th_1+ \dsum_{j=2}^n \dfrac{\ga_{1j}}{L_1}\Big), & \mbox{ if }i=1;\\
-\Big( \dfrac{\mu_i}{L_i}+\th_i+ \dfrac{\ga_{i1}}{L_1}\Big), & \mbox{ if }i=2,\cdots,n,
\end{array}
\right.
$$
and $\bE$ is a rank two matrix:
$$
\bE=
\begin{pmatrix}
0 & \dfrac{\ga_{12}}{L_2} & \cdots & \dfrac{\ga_{1n}}{L_n} \cr
\dfrac{\ga_{21}}{L_1} & & &  \cr
\vdots & & \ddots & \cr
\dfrac{\ga_{n1}}{L_1} &  & & 0\cr
\end{pmatrix}
=\begin{pmatrix}
0 & 1 \cr
\ga_{21}& 0 \cr
\vdots & \vdots \cr
\ga_{n1}& 0\cr
\end{pmatrix}
\begin{pmatrix}
\dfrac{1}{L_1} & 0 & \cdots & 0\cr
0 & \dfrac{\ga_{12}}{L_2} & \cdots & \dfrac{\ga_{1n}}{L_n} \cr
\end{pmatrix}.
$$
The inverse of $\bA$ can be computed by the Sherman-Morrison-Woodbury formula.

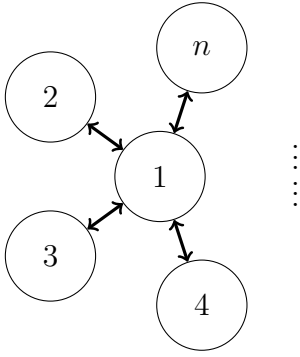
\begin{figure} 
\centering 
\begin{tikzpicture} [domain=0:20,scale=0.6] 

\node[minimum size=12mm,draw,circle] (y1) at (0:0) {$1$};
\node[anchor=center] at (3,-.2) {$\vdots$};
\node[anchor=center] at (3,.6) {$\vdots$};
\node[minimum size=12mm,draw,circle] (yn) at (72:3) {$n$};
\node[minimum size=12mm,draw,circle] (y2) at (72*2:3) {$2$};
\node[minimum size=12mm,draw,circle] (y3) at (72*3:3) {$3$};
\node[minimum size=12mm,draw,circle] (y4) at (72*4:3) {$4$};

\draw[<->,very thick] (y1) -- (y2);
\draw[<->,very thick] (y1) -- (y3);
\draw[<->,very thick] (y1) -- (y4);
\draw[<->,very thick] (y1) -- (yn);

%

\end{tikzpicture} 
\caption{A star network of warehouses.}
\end{figure} 

\begin{proposition}
{\rm (Sherman-Morrison-Woodbury Formula \cite[p. 65]{Golub})} Let $M$ be a non-singular $r \times r$ matrix, $u$ and $v$
be two $r \times l$ $(l \leq r)$ matrices such that the matrix $(I_l + v^tM^{-1}u)$ is non-singular. Then we have:
$$(M+uv^t)^{-1}=M^{-1}-M^{-1}u(I_l + v^tM^{-1}u)^{-1}v^tM^{-1}.$$
\end{proposition}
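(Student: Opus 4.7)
The plan is to verify the stated identity by direct multiplication, rather than deriving it from first principles. This is the cleanest route because both sides are explicit expressions, so it suffices to check that $(M+uv^t)$ multiplied by the proposed inverse yields $I_r$.

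First I would introduce the shorthand $K := I_l + v^t M^{-1} u$, which by hypothesis is non-singular. Then I would compute the product
$$
(M + uv^t)\bigl[M^{-1} - M^{-1} u K^{-1} v^t M^{-1}\bigr]
$$
by distributing, obtaining four terms:
$$
I_r - u K^{-1} v^t M^{-1} + uv^t M^{-1} - uv^t M^{-1} u K^{-1} v^t M^{-1}.
$$
The idea is to factor every term after $I_r$ as $u[\,\cdot\,]v^t M^{-1}$.

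Next I would collect the inner bracket, which becomes
$$
I_l - K^{-1} - v^t M^{-1} u \, K^{-1} = I_l - \bigl(I_l + v^t M^{-1} u\bigr) K^{-1} = I_l - K K^{-1} = 0.
$$
Thus the product reduces to $I_r$, and an entirely analogous computation (multiplying on the other side) shows that the proposed inverse is also a left inverse; alternatively, since $M+uv^t$ is square, a one-sided inverse suffices. This yields the formula.

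There is no real obstacle here—the only thing one must notice is the algebraic trick of rewriting $I_l + v^t M^{-1} u$ inside the bracket so that it cancels the $K^{-1}$. Consequently the proof is essentially a one-line verification once the shorthand $K$ is introduced, and for a reference proof one can simply cite \cite[p.~65]{Golub}.
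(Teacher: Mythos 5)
Your verification is correct: the four-term expansion, the factorization of the non-identity terms as $u[\,\cdot\,]v^tM^{-1}$, and the cancellation $I_l - KK^{-1}=0$ all check out, and the remark that a one-sided inverse suffices for a square matrix is valid. The paper itself gives no proof of this proposition and simply cites \cite[p.~65]{Golub}; your direct-multiplication argument is precisely the standard verification found in that reference, so there is nothing to reconcile.
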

Then by Proposition 3, we have
$$
(D+E)^{-1}=D^{-1}-D^{-1}\left(\begin{smallmatrix}
0 & 1 \cr
\ga_{21}& 0 \cr
\vdots & \vdots \cr
\ga_{n1}& 0\cr
\end{smallmatrix}\right)
\left(
I_2+
\left(\begin{smallmatrix}
\frac{1}{L_1} & 0 & \cdots & 0\cr
0 & \frac{\ga_{12}}{L_2} & \cdots & \frac{\ga_{1n}}{L_n} \cr
\end{smallmatrix}\right)D^{-1}
\left(\begin{smallmatrix}
0 & 1 \cr
\ga_{21}& 0 \cr
\vdots & \vdots \cr
\ga_{n1}& 0\cr
\end{smallmatrix}\right)
\right)^{-1}
\left(\begin{smallmatrix}
\frac{1}{L_1} & 0 & \cdots & 0\cr
0 & \frac{\ga_{12}}{L_2} & \cdots & \frac{\ga_{1n}}{L_n} \cr
\end{smallmatrix}\right)D^{-1},
$$
or
$$
(D+E)^{-1}=D^{-1}-
\Big(\dsum_{i=1}^n\frac{\ga_{1i}^2}{L_i D_{ii}}\Big)^{-1}
\left(\begin{smallmatrix}
0 & \frac{1}{D_{11}} \cr
\frac{\ga_{21}}{D_{22}}& 0 \cr
\vdots & \vdots \cr
\frac{\ga_{n1}}{D_{nn}}& 0\cr
\end{smallmatrix}\right)
\left(\begin{smallmatrix}
1&\frac{1}{L_1D_{11}} \cr
\sum_{i=2}^n\frac{\ga_{1i}^2}{L_i D_{ii}} & 1 \cr
\end{smallmatrix}\right)
\left(\begin{smallmatrix}
\frac{1}{L_1D_{11}} & 0 & \cdots & 0\cr
0 & \frac{\ga_{12}}{L_2D_{22}} & \cdots & \frac{\ga_{1n}}{L_nD_{nn}} \cr
\end{smallmatrix}\right).
$$

\noindent(3) \underline{A linear network:} Suppose that the warehouses form a linear network, see Figure 3.
Then the matrix $\bA$ is given by
$$
\bA=
\begin{pmatrix}
-\Big( \dfrac{\mu_1}{L_1}+\th_1+  \dfrac{\ga_{12}}{L_1}\Big) & \dfrac{\ga_{12}}{L_2} &   & & 0 \cr
\dfrac{\ga_{21}}{L_1} & -\Big( \dfrac{\mu_2}{L_2}+\th_2+  \dfrac{\ga_{21}}{L_2}+\dfrac{\ga_{23}}{L_2}\Big) &\dfrac{\ga_{23}}{L_3} &  & \cr
& \ddots & \ddots & \ddots & \cr
&&\ddots&\ddots&\dfrac{\ga_{n-1,n}}{L_n}\cr
0 &  & &\dfrac{\ga_{n,n-1}}{L_{n-1}} & -\Big( \dfrac{\mu_n}{L_n}+\th_n+  \dfrac{\ga_{n,n-1}}{L_n}\Big)\cr
\end{pmatrix},
$$
which is a tridiagonal matrix. By Proposition 1, the solution to the system is 
$$
\by(t)=e^{\bA t} \by(0)+\bA^{-1}e^{\bA t}\bb-\bA^{-1} \bb,
$$
and the equilibrium point is 
$$
\by^*=-\bA^{-1}\bb.
$$
The inverse of $\bA$ can be computed by a simple algorithm presented in \cite{Lewis}.
If $L_1=L_2=\cdots=L_n$, then $\bA$ is symmetric.
There are fast algorithms for finding the exponential of a symmetric tridiagonal matrix, see for instance \cite{Lu}.

\begin{figure} 
\centering 
\begin{tikzpicture} [domain=0:20,scale=0.6] 

\node[minimum size=12mm,draw,circle] (y1) at (0,0) {$1$};
\node[minimum size=12mm,draw,circle] (y2) at (3,0) {$2$};
\node at (6,0) {$\cdots \cdots$};
\node[minimum size=12mm,draw,circle] (y3) at (9,0) {$n-1$};
\node[minimum size=12mm,draw,circle] (y4) at (12,0) {$n$};

\draw[<->,very thick] (y1) -- (y2);
\draw[<->,very thick] (y3) -- (y4);
\draw[<->,very thick] (y2) -- (5,0);
\draw[<->,very thick] (y3) -- (7,0);

\draw[->] (y1) -- (0, -2);
\draw[->] (y2) -- (3, -2);
\draw[->] (y3) -- (9, -2);
\draw[->] (y4) -- (12, -2);

\draw[->] (0, 2) -- (y1);
\draw[->] (3, 2) -- (y2);
\draw[->] (9, 2) -- (y3);
\draw[->] (12, 2) -- (y4);

\draw[dashed,->] (y1) -- (-1, -1.8);
\draw[dashed,->] (y2) -- (2, -1.8);
\draw[dashed,->] (y3) -- (8, -1.8);
\draw[dashed,->] (y4) -- (11, -1.8);

\end{tikzpicture} 
\caption{A linear network of warehouses.}
\end{figure}
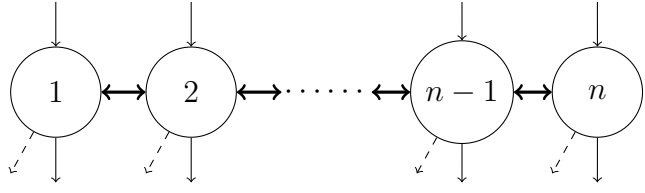 

\subsection{Aggregated model for the warehouses}
In this section, we propose an aggregated inventory model for the warehouses by aggregating the supply, demand and inventory at the warehouses.
For the deterioration process, we take the average deterioration percentage.
Let
$$
L_a=\sum_{i=1}^n L_i, \quad \mu_a=\sum_{i=1}^n \mu_i, \quad \la_a=\sum_{i=1}^n \la_i \quad \mbox{and} \quad
\oth=\dfrac{1}{n}\sum_{i=1}^n \th_i.
$$
The rate of change of the aggregated inventory level at time $t$ is the total supply rate subtracting the total demand rate and the total deterioration rate:
\begin{equation}
\label{alwh}
\dfrac{dy_a(t)}{dt}=\mu_a\dfrac{L_a-y_a(t)}{L}-\la_a-\oth y_a(t)=-\Big(\dfrac{\mu_a}{L_a}+\oth\Big)y_a(t)+(\mu_a-\la_a).
\end{equation}
The solution of system (\ref{alwh}) is
$$
y_a(t)=\exp\big[-(\mu_a/L_a+\oth)t\big]y_a(0)-\dfrac{\exp\big[-(\mu_a/L_a+\oth)t\big]}{\mu_a/L_a+\oth}(\mu_a-\la_a)+\dfrac{\mu_a-\la_a}{\mu_a/L+\oth},
$$
and the equilibrium value of the aggregated inventory level is
$$
y_a^*=\lim_{t\to \infty}y_a(t) = \dfrac{\mu_a-\la_a}{\mu_a/L_a+\oth}.
$$

The aggregated model is a simplified version of the model in Section 2.1.
The following propositions compare the results of the two models.

\begin{proposition}
Let $(y_1(t),\cdots,y_n(t))$ and $y_a(t)$ be the solutions of systems (\ref{nlwh}) and (\ref{alwh}) respectively.
If $$
\dfrac{\mu_1}{L_1}=\dfrac{\mu_2}{L_2}=\cdots=\dfrac{\mu_n}{L_n} \quad \mbox{and} \quad \th_1=\cdots=\th_n$$
then
$$\dsum_{i=1}^n y_i^* =y_a^*.$$
Furthermore, if $\sum_{i=1}^n y_i(0) =y_a(0)$, then
$$\dsum_{i=1}^n y_i(t) =y_a(t),\quad \mbox{for } t \geq 0.$$
\end{proposition}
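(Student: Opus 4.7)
The plan is to show that $S(t) := \sum_{i=1}^n y_i(t)$ satisfies exactly the same scalar linear ODE (\ref{alwh}) as the aggregated inventory level $y_a(t)$. Pointwise equality on $[0,\infty)$ will then follow from uniqueness of solutions for this scalar ODE once the initial conditions are matched, and the equilibrium identity $\sum_i y_i^* = y_a^*$ will drop out either by setting $dS/dt = 0$ or by passing to the limit $t \to \infty$ (using Proposition 2 to know the limit exists).

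First I would sum the $n$ equations of (\ref{nlwh}) over $i$. The constant part contributes $\sum_i (\mu_i - \la_i) = \mu_a - \la_a$, and the diagonal linear part contributes $-\sum_i(\mu_i/L_i + \th_i)y_i(t)$. The remaining transshipment contributions are the two double sums
$$
-\sum_{i=1}^n\sum_{j=1}^n \frac{\ga_{ij}}{L_i}\,y_i(t) \quad \mbox{and} \quad +\sum_{i=1}^n\sum_{j=1}^n \frac{\ga_{ij}}{L_j}\,y_j(t).
$$
The central observation is that these two double sums cancel exactly: swapping the dummy indices $i \leftrightarrow j$ in the second converts it into $\sum_i\sum_j (\ga_{ji}/L_i)\,y_i$, which equals the first by the symmetry $\ga_{ij} = \ga_{ji}$ built into the basic model. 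This is the only step that requires genuine care; the rest is bookkeeping.

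Next I would invoke the parametric hypothesis. Writing $c$ for the common value of $\mu_i/L_i$, we have $\mu_i = cL_i$, so $\mu_a = cL_a$ and $c = \mu_a/L_a$. Combined with $\th_i = \oth$ (the common value coincides with the average), the surviving linear term collapses to $-(\mu_a/L_a + \oth)S(t)$, so
$$
\frac{dS}{dt} = (\mu_a - \la_a) - \Big(\frac{\mu_a}{L_a} + \oth\Big)S(t),
$$
which is precisely (\ref{alwh}) with $S$ in place of $y_a$. The first assertion follows on setting $dS/dt = 0$ and comparing with the formula for $y_a^*$. For the time-dependent assertion, uniqueness of solutions to this scalar linear ODE with the matched initial datum $S(0) = y_a(0)$ yields $S(t) = y_a(t)$ for all $t \geq 0$.
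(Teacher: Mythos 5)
Your proposal is correct and follows essentially the same route as the paper: the paper left-multiplies the system $\by'=\bA\by+\bb$ by the row vector $(1,\cdots,1)$ and observes that the column sums of $\bA$ reduce to $-(\mu_i/L_i+\th_i)$ (the transshipment entries cancelling by the symmetry $\ga_{ij}=\ga_{ji}$), which is exactly your index-swap cancellation of the two double sums, and then concludes by uniqueness of the solution of (\ref{alwh}). Your handling of the equilibrium part (setting $dS/dt=0$) likewise matches the paper's ``the first part can be proved similarly.''
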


\begin{proof}
We prove the second part of the proposition. The first part can be proved similarly.
The first two conditions imply
$$
\dfrac{\mu_1}{L_1}=\dfrac{\mu_2}{L_2}=\cdots=\dfrac{\mu_n}{L_n}=\dfrac{\mu_a}{L_a}\quad \mbox{and} \quad\th_1=\cdots=\th_n =\oth.
$$
The solution of system (\ref{nlwh}) satisfies
$$
\by'=A\by+\bb,
$$
which means
$$
\begin{array}{rcl}
(1,\cdots,1)\by'&=&(1,\cdots,1)A\by+(1,\cdots,1)\bb\\
\dsum_{i=1}^n y_i'(t)&=&\left( -\Big(\dfrac{\mu_1}{L_1}+\th_1\Big),\cdots,-\Big(\dfrac{\mu_n}{L_n}+\th_n\Big) \right)\by+(\mu_a-\la_a)\\
\dsum_{i=1}^n y_i'(t)&=& -\Big(\dfrac{\mu_a}{L_a}+\oth\Big)\dsum_{i=1}^n y_i(t)+(\mu_a-\la_a).\\
\end{array}
$$
Hence $\sum_{i=1}^n y_i(t)$ is a solution of system (\ref{alwh}). The proof follows from the uniqueness of the solution of (\ref{alwh}).
\end{proof}

\begin{proposition}
Let $(y_1^*,\cdots,y_n^*)$ and $y_a^*$ be the equilibrium points of systems (\ref{nlwh}) and (\ref{alwh}) respectively.
If $L_1=\cdots=L_n$, then
$$
\Big|y_a^*-\dsum_{i=1}^n y_i^*\Big| \leq \dfrac{\sqrt{n}(\mu_a-\la_a)}{\displaystyle\min_i(\mu_i/L_i+\th_i)}+\dfrac{\mu_a-\la_a}{\mu_a/L_a+\oth}.
$$
\end{proposition}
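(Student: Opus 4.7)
The plan is to apply the triangle inequality
$$\Big|y_a^*-\sum_{i=1}^n y_i^*\Big|\leq |y_a^*|+\Big|\sum_{i=1}^n y_i^*\Big|,$$
observe that $y_a^*=(\mu_a-\la_a)/(\mu_a/L_a+\oth)$ is exactly the second summand on the right-hand side of the claimed bound, and then control $\sum_i y_i^*$ by the first summand.

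To obtain a bound on $\sum_i y_i^*$, I would begin from the equilibrium equation $\bA\by^*=-\bb$ and sum its components, i.e., multiply on the left by the row vector $(1,\dots,1)$. Computing the column sums of $\bA$ column by column, the transshipment contribution $-\sum_j \ga_{ij}/L_i$ in the diagonal of column $i$ should cancel against the off-diagonal entries $\sum_{j\neq i}\ga_{ji}/L_i$ of that column by the symmetry $\ga_{ij}=\ga_{ji}$, leaving the identity
$$\sum_{i=1}^n \Big(\dfrac{\mu_i}{L_i}+\th_i\Big)y_i^* \;=\; \mu_a-\la_a.$$
Under the implicit hypothesis $\mu_i>\la_i$ (without which the right-hand side of the claimed bound is not even meaningful), Proposition 1 guarantees each $y_i^*>0$, so every summand on the left is nonnegative.

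Setting $a_i:=(\mu_i/L_i+\th_i)\,y_i^*\ge 0$ and invoking the elementary inequality $\sum_i a_i^2\leq\bigl(\sum_i a_i\bigr)^2$ for nonnegative reals, combined with the pointwise bound $a_i^2\geq\bigl[\min_j(\mu_j/L_j+\th_j)\bigr]^2(y_i^*)^2$, I expect to reach
$$\|\by^*\|_2 \;\leq\; \dfrac{\mu_a-\la_a}{\min_i(\mu_i/L_i+\th_i)}.$$
A final application of Cauchy--Schwarz, namely $\sum_i y_i^*\leq\sqrt{n}\,\|\by^*\|_2$, then produces exactly the first summand on the right-hand side of the stated inequality.

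The main obstacle will be the column-sum calculation underpinning the key identity: the off-diagonal entries $A_{ji}=\ga_{ji}/L_i$ in column $i$ must be matched, denominator by denominator, with the transshipment term $\sum_j\ga_{ij}/L_i$ sitting inside the diagonal entry, and the cancellation rests essentially on $\ga_{ij}=\ga_{ji}$. It is worth remarking that the hypothesis $L_1=\cdots=L_n$ never actually enters the argument above, so the bound plausibly holds in greater generality than stated.
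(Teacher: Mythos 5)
Your proposal is correct, but it follows a genuinely different route from the paper. The paper exploits the hypothesis $L_1=\cdots=L_n$ to make $\bA$ symmetric, so that $\|\bA^{-1}\|_2$ equals the spectral radius of $\bA^{-1}$; it then bounds the eigenvalues of $\bA$ away from zero via Lemma 1 (Gershgorin) to get $\|\bA^{-1}\|_2\leq \big[\min_i(\mu_i/L_i+\th_i)\big]^{-1}$, and estimates $\big|\sum_i y_i^*\big|=|(1,\cdots,1)\bA^{-1}\bb|\leq \sqrt{n}\,\|\bA^{-1}\|_2\,\|\bb\|_2$. You instead extract the exact identity $\sum_i(\mu_i/L_i+\th_i)\,y_i^*=\mu_a-\la_a$ by summing the columns of $\bA$ (the transshipment terms cancel precisely because $\ga_{ij}=\ga_{ji}$ --- this is the same cancellation the paper itself performs in the proof of Proposition 4), and then uses positivity of $\by^*$ together with elementary inequalities. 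Your column-sum computation is correct, and your approach buys two things: it dispenses entirely with the hypothesis $L_1=\cdots=L_n$, and it actually yields the sharper bound $\sum_i y_i^*\leq (\mu_a-\la_a)/\min_i(\mu_i/L_i+\th_i)$ directly, so your detour through $\|\by^*\|_2$ and Cauchy--Schwarz only serves to reinsert the factor $\sqrt{n}$ and match the stated form. Both arguments rest on the same unstated hypothesis $\mu_i\geq\la_i$ for all $i$: the paper needs it to pass from $\|\bb\|_2$ to $\mu_a-\la_a$, and you need it (via Proposition 1) to guarantee $y_i^*>0$ so that the summands $(\mu_i/L_i+\th_i)y_i^*$ are nonnegative; you are right to flag this explicitly. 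One minor caveat: Proposition 1 states $\by^*>0$ under the additional condition $y_i(0)>0$, but since $\by^*=-\bA^{-1}\bb$ does not depend on the initial condition, this is harmless.
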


\begin{proof}
If $L_1=\cdots=L_n$, then the matrix $A$ is symmetric.
Therefore, applying the Euclidean norm on $A^{-1}$ yields
$$\|A^{-1}\|_2=\max_i|\psi_i(A^{-1})|,$$
where $\psi_i(A)$ is the $i$th eigenvalue of $A$. By Lemma 1, for $i=1,\cdots,n$,
$$
\psi_i(A) \in \left( -\max_i\Big( \dfrac{\mu_i}{L_i}+\th_i+ 2\dsum_{j=1}^n \dfrac{\ga_{ij}}{L_i}\Big), 
-\min_i\Big( \dfrac{\mu_i}{L_i}+\th_i\Big)\right).
$$
Hence, 
$$
\psi_i(A^{-1}) \in \left( -\Big[\min_i\Big( \dfrac{\mu_i}{L_i}+\th_i\Big)\Big]^{-1},
-\Big[\max_i\Big( \dfrac{\mu_i}{L_i}+\th_i+ 2\dsum_{j=1}^n \dfrac{\ga_{ij}}{L_i}\Big)\Big]^{-1}\right) \subset(-\infty,0),
$$
which gives
$$
\|A^{-1}\|_2=\max_i|\psi_i(A^{-1})| \leq \Big[\min_i\Big( \dfrac{\mu_i}{L_i}+\th_i\Big)\Big]^{-1}.
$$
Now, 
$$
\begin{array}{rcl}
\Big|y_a^*-\dsum_{i=1}^n y_i^*\Big| &\leq& \Big|\dsum_{i=1}^n y_i^*\Big| + |y_a^*|\\
&=&|(1,\cdots,1)A^{-1}\bb|+ \dfrac{\mu_a-\la_a}{\mu_a/L_a+\oth}\\
&\leq&\|(1,\cdots,1)\|_2\times  \|A^{-1}\|_2\times \|\bb\|_2+ \dfrac{\mu_a-\la_a}{\mu_a/L_a+\oth}\\
&\leq&\sqrt{n}\times \Big[\min_i\Big( \dfrac{\mu_i}{L_i}+\th_i\Big)\Big]^{-1}\times (\mu_a-\la_a)+ \dfrac{\mu_a-\la_a}{\mu_a/L_a+\oth}\\
&=&\dfrac{\sqrt{n}(\mu_a-\la_a)}{\displaystyle\min_i(\mu_i/L_i+\th_i)}+\dfrac{\mu_a-\la_a}{\mu_a/L_a+\oth}.
\end{array}
$$
\end{proof}

\subsection{Numerical examples}
We first give an example of three warehouses, i.e. $n=3$.
Suppose that the parameters are
$$
\left\{
\begin{array}{l}
L_1=100, \quad L_2=200, \quad L_3=200, \quad \mu_1=3, \quad \mu_2=4, \quad \mu_3=5, \\
\th_1=0.1, \quad \th_2=0.2, \quad \th_3=0.3, \quad \la_1=1, \quad \la_2=2, \quad \la_3=3, \\
\big[\ga_{ij}\big]_{ij}=\begin{pmatrix}
0 & 0.5 & 0.2 \\[-2mm]
0.5 & 0 & 1\\[-2mm]
0.2 & 1 &0 \\
\end{pmatrix}, \quad  y_1(0)=50, \quad y_2(0)=100, \quad y_3(0)=150.
\end{array}
\right.
$$

\begin{table} {\scriptsize
\begin{tabular}{ccccccccccc}
\hline
$t$&	10	&	20	&	30	&	40	&	50	&	60	&	70	&	80	&	90	&	100	\\
\hline
$y_1(t)$&	24.312	&	17.277	&	15.445	&	14.974	&	14.854	&	14.824	&	14.816	&	14.814	&	14.813	&	14.813	\\
$y_2(t)$&	19.360	&	10.393	&	9.394	&	9.274	&	9.258	&	9.255	&	9.255	&	9.255	&	9.255	&	9.255	\\
$y_3(t)$&	11.908	&	6.533	&	6.291	&	6.274	&	6.272	&	6.272	&	6.272	&	6.272	&	6.272	&	6.272	\\
\hline
\end{tabular}
\caption{The inventory levels at different time $t$.}
}\end{table}

The inventory levels at the three warehouses at different time $t$ are given in Table 1.
The equilibrium point is
$$
(y_1^*,y_2^*,y_3^*)=(14.813, 9.255, 6.272).
$$
It can be observed that the inventory levels reach equilibrium when $t$ approaches $100$.
We next consider aggregating the three warehouses into one warehouse with 
$$
L_a=500, \quad \mu_a=12, \quad \oth=0.2, \quad \la_a=6, \quad y_a(0)=300.
$$
The equilibrium point is
$$
y_a^*= 26.786.
$$

We next present some numerical examples on the equilibrium points for different values of the system parameters.
In all the numerical tests, we assume each $L_i=200, \mu_a=48, \la_a=24$ and 
$$
\th_i=i \times 0.05 \quad \mbox{for } i=1,2,\cdots,8.
$$ 
In Tables 2-4, we assume $\ga_{ij}=1$ for $i\neq j$ and for each value of $\mu_i=16,20,24$,
we solve for $n=2,4,8$ and $\la_i=4,8,12$. 
The total inventory level of the warehouses in equilibrium for each case is also presented. 
We then compare the results with the corresponding equilibrium value of the aggregated inventory model, which are presented in bold font in the tables.
We observe that when $\mu_i$ increases, all the inventory levels at each warehouses in equilibrium are also increased. 
When $\la_i$ increases, all the inventory levels at each warehouses in equilibrium are decreased.
For each fixed $n$, the error due to aggregating the inventory levels is more sensitive to the changes in $\la_i$ than the changes in $\mu_i$. 

\begin{table} \centering{
\scriptsize
\begin{tabular}{lllllll}
\hline
&	$n=2$	&	&	$n=4$ &	&	$n=8$	\\
\hline
$\la_i=4$&(91.4	 	67.3)	&	158.7	{\bf 154.8}	&	(88.4	 	66.3	 	53.0	 	44.2)	&	251.9	{\bf 234.1}	&	(81.3	 	62.8	 	51.2	 	43.2	 	37.4	 	32.9	 	29.4	 	26.6)	&	364.8	{\bf 314.8}\\
$\la_i=8$&(60.9	 	44.9)	&	105.8	{\bf 103.2}	&	(58.9	 	44.2	 	35.4	 	29.5)	&	168.0	{\bf 156.1}	&	(54.2	 	41.9	 	34.1	 	28.8	 	24.9	 	21.9	 	19.6	 	17.7)	&	243.2		{\bf 209.8}\\
$\la_i=12$&(30.5	 	22.4)	&	52.9	{\bf 51.6}	&	(29.5	 	22.1	 	17.7	 	14.7)	&	84.0	{\bf 78.0}	&	(27.1	 	20.9	 	17.1	 	14.4	 	12.5	 	11.0	 	9.8	 	8.9)	&	121.6		{\bf 104.9}\\
\hline
\end{tabular}
\caption{The equilibrium points of the inventory level when $\mu_i=16$.}

\begin{tabular}{lllllll}
\hline
&	$n=2$	&	&	$n=4$ &	&	$n=8$	\\
\hline
$\la_i=4$&(105.8	 	80.6)	&	186.5	\textbf{182.9}	&	(103.0	 	79.6	 	64.9	 	54.7)	&	302.2	\textbf{284.4}	&	(96.0	 	76.0	 	62.9	 	53.6	 	46.8	 	41.5	 	37.2	 	33.8)	&	447.7	\textbf{393.8}\\
$\la_i=8$&(79.4	 	60.5)	  & 139.8	\textbf{137.1}	&	(77.3	 	59.7	 	48.6	 	41.0)	&	226.6	\textbf{213.3}	  &	(72.0	 	57.0	 	47.2	 	40.2	 	35.1	 	31.1	 	27.9	 	25.3)	&	335.8	\textbf{295.4}\\
$\la_i=12$&(52.9	 	40.3)	&	93.2	\textbf{91.4}	&	(51.5	 	39.8	 	32.4	 	27.4)	&	151.1	\textbf{142.2}	    &	(48.0	 	38.0	 	31.4	 	26.8	 	23.4	 	20.7	 	18.6	 	16.9)	&	223.9	\textbf{196.9}\\
\hline
\end{tabular}
\caption{The equilibrium points of the inventory level when $\mu_i=20$.}

\begin{tabular}{lllllll}
\hline
&	$n=2$	&	&	$n=4$ &	&	$n=8$	\\
\hline
$\la_i=4$&(116.9	 	91.5)	&	208.4	\textbf{205.1}	&	(114.3	 	90.5	 	74.9	 	63.9)	&	343.6	\textbf{326.5}&	(107.6	 	86.9	 	72.9	 	62.8	 	55.1	 	49.1	 	44.3	 	40.3)	&	519.0	\textbf{463.8}\\
$\la_i=8$&(93.5	 	73.2)	  &	166.7	\textbf{164.1}	&	(91.4	 	72.4	 	59.9	 	51.1)	&	274.8	\textbf{261.2}	&	(86.1	 	69.5	 	58.3	 	50.2	 	44.1	 	39.3	 	35.4	 	32.3)	  &	415.2	\textbf{371.0}\\
$\la_i=12$&(70.1	 	54.9)	&	125.0	\textbf{123.1}	&	(68.6	 	54.3	 	44.9	 	38.3)	&	206.1	\textbf{195.9}	&	(64.6	 	52.1	 	43.7	 	37.7	 	33.1	 	29.5	 	26.6	 	24.2)	  &	311.4	\textbf{278.3}\\
\hline
\end{tabular}
\caption{The equilibrium points of the inventory level when $\mu_i=24$.}
}\end{table}

For the numerical test in Table 5, we assume all $\mu_i=20$ and all $\la_i=12$. We calculate the equilibrium points for the cases when
$$\ga_{ij}=0.1, 0.5, 1, 2,\mbox{ and } 5$$
for $i\neq j$ and $n=2,4,8$.
The corresponding equilibrium values of the aggregated inventory model are presented in bold font in the table.
We observe that the inventory levels at each warehouses in equilibrium are reduced when $\ga_{ij}$ increases.
It shows that the transshipment policy is useful in reducing the inventory levels, which means a reduction of inventory costs.

\begin{table} {\scriptsize
\begin{tabular}{lllllll}
\hline
&	$n=2$	&	&	$n=4$ &	&	$n=8$	\\
\hline
$\ga_{ij}=0.1$&(53.3	 	40.0)	&	93.3	 	\textbf{91.4}	&	(53.1	 	40.0	 	32.0	 	26.7)	&	151.9	\textbf{142.2}	&	(52.7	 	39.8	 	31.9	 	26.7	 	22.9	 	20.1	 	17.9	 	16.1)	&	228.1	\textbf{196.9}\\
$\ga_{ij}=0.5$&(53.1	 	40.2)	&	93.3	 	\textbf{91.4}	&	(52.4	 	39.9	 	32.2	 	27.0)	&	151.5	\textbf{142.2}	&	(50.4	 	38.9	 	31.7	 	26.8	 	23.1	 	20.4	 	18.2	 	16.5)	&	226.0	\textbf{196.9}\\
$\ga_{ij}=1$&(52.9	 	40.3)	&	93.2	 	\textbf{91.4}	&	(51.5	 	39.8	 	32.4	 	27.4)	&	151.1	\textbf{142.2}	&	(48.0	 	38.0	 	31.4	 	26.8	 	23.4	 	20.7	 	18.6	 	16.9)	&	223.9	\textbf{196.9}\\
$\ga_{ij}=2$&(52.5	 	40.6)	&	93.1	 	\textbf{91.4}	&	(50.0	 	39.6	 	32.8	 	28.0)	&	150.3	\textbf{142.2}	&	(44.4	 	36.4	 	30.9	 	26.9	 	23.7	 	21.3	 	19.3	 	17.6)	&	220.4	\textbf{196.9}\\
$\ga_{ij}=5$&(51.6	 	41.3)	&	92.9	 	\textbf{91.4}	&	(46.9	 	39.1	 	33.5	 	29.3)	&	148.7	\textbf{142.2}	&	(38.1	 	33.4	 	29.7	 	26.7	 	24.3	 	22.3	 	20.5	 	19.1)	&	214.0	\textbf{196.9}\\
\hline
\end{tabular}
\caption{The equilibrium points of the inventory level for different values of $\ga_{ij}$.}
}\end{table}

\section{A Multi-echelon model} 
In this section, we present a model for a multi-echelon inventory system.
Suppose that there are $m$ echelons in the inventory system.
The warehouses in each echelon are aggregated as one warehouse by the method described in Section 2.3.
Therefore, an $m$ echelon inventory system can be modelled by an $m$ aggregated warehouses model, see Figure 4.
The following notations for each aggregated warehouse $i$ $(1 \leq i \leq m)$ and time $t \geq 0$ are used in this section:

\begin{tabular}{ll}
$C_i$ & maximum inventory level\\
$x_i(t)$ & inventory level $(0 \leq x_i(t)\leq C_i)$\\
$\mu^c_i$ & maximum supply rate\\
$\th^c_i$ & average percentage of items deteriorated per unit time\\
$\la^c$ & demand rate at the lowest echelon $m$\\
\end{tabular}

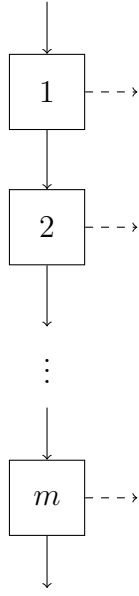
\begin{figure} 
\centering 
\begin{tikzpicture} [domain=0:20,scale=0.6] 

\node[minimum size=10mm,draw] (xm) at (0,0) {$m$};
\node at (0,3) {$\vdots$};
\node[minimum size=10mm,draw] (x2) at (0,6) {$2$};
\node[minimum size=10mm,draw] (x1) at (0,9) {$1$};

\draw[->] (0,2) -- (xm);
\draw[->] (x2) -- (0,3.8);
\draw[->] (x1) -- (x2);
\draw[->] (xm) -- (0,-2);
\draw[->] (0,11) -- (x1);

\draw[dashed,->] (xm) -- (2, 0);
\draw[dashed,->] (x2) -- (2, 6);
\draw[dashed,->] (x1) -- (2, 9);
%

\end{tikzpicture} 
\caption{A multi-echelon inventory system.}
\end{figure} 

\bigskip
With the above notations, the system of ordinary differential equations governing the inventory level in each echelon is given by:
$$\left\{
\begin{array}{rcl}
\dfrac{dx_1(t)}{dt}&=& \mu^c_1\dfrac{(C_1-x_1(t))}{C_1}-\th^c_1 x_1(t)-\mu^c_2\dfrac{x_1(t)}{C_1}\dfrac{(C_2-x_2(t))}{C_2};\\
\dfrac{dx_i(t)}{dt}&=& \mu^c_i\dfrac{x_{i-1}(t)}{C_{i-1}}\dfrac{(C_i-x_i(t))}{C_i}-\th^c_i x_i(t)-\mu^c_{i+1}\dfrac{x_i(t)}{C_i}\dfrac{(C_{i+1}-x_{i+1}(t))}{C_{i+1}};\\
\dfrac{dx_m(t)}{dt}&=& \mu^c_m\dfrac{x_{m-1}(t)}{C_{m-1}}\dfrac{(C_m-x_m(t))}{C_m}-\th^c_m x_m(t)-\la^c.\\
\end{array}
\right.
$$
Rearranging the terms we have:
$$\left\{
\begin{array}{rcl}
\dfrac{dx_1(t)}{dt}&=& \mu^c_1-\Big(\dfrac{\mu^c_1}{C_1}+\th^c_1+ \dfrac{\mu^c_2}{C_1}\Big)x_1(t)+\dfrac{\mu^c_2}{C_1C_2}x_1(t)x_2(t);\\
\dfrac{dx_i(t)}{dt}&=& \dfrac{\mu^c_i}{C_{i-1}}x_{i-1}(t) -\Big(\th^c_i + \dfrac{\mu^c_{i+1}}{C_i}\Big)x_i(t)
-\dfrac{\mu^c_i}{C_{i-1}C_i}x_{i-1}(t)x_i(t)
+\dfrac{\mu^c_{i+1}}{C_iC_{i+1}}x_i(t)x_{i+1}(t);\\
\dfrac{dx_m(t)}{dt}&=& -\la^c+ \dfrac{\mu^c_m}{C_{m-1}}x_{m-1}(t) -\th^c_m x_m(t)-\dfrac{\mu^c_m}{C_{m-1}C_m}x_{m-1}(t)x_m(t).\\
\end{array}
\right.
$$
In equilibrium, we have
$$\left\{
\begin{array}{lll}
0=F_1(x^*_1,\cdots,x^*_m)&=& \mu^c_1-\Big(\dfrac{\mu^c_1}{C_1}+\th^c_1+ \dfrac{\mu^c_2}{C_1}\Big)x^*_1+\dfrac{\mu^c_2}{C_1C_2}x^*_1x^*_2;\\
0=F_i(x^*_1,\cdots,x^*_m)&=& \dfrac{\mu^c_i}{C_{i-1}}x_{i-1}^* -\Big(\th^c_i + \dfrac{\mu^c_{i+1}}{C_i}\Big)x_i^*
-\dfrac{\mu^c_i}{C_{i-1}C_i}x_{i-1}^*x_i^* +\dfrac{\mu^c_{i+1}}{C_iC_{i+1}}x_i^*x_{i+1}^*;\\
0=F_m(x^*_1,\cdots,x^*_m)&=&-\la^c+ \dfrac{\mu^c_m}{C_{m-1}}x_{m-1}^* -\th^c_m x_m^*-\dfrac{\mu^c_m}{C_{m-1}C_m}x_{m-1}^*x_m^*.\\
\end{array}
\right.
$$

The following proposition gives a condition of obtaining a stable equilibrium point.
\begin{proposition}
Let $(x^*_1,\cdots,x^*_m)$ be the non-negative equilibrium point with $x^*_i\leq C_i$, $i=1,\cdots,m$.
If
\begin{equation}\label{stablecont}
\left\{
\begin{array}{rcl}
\dfrac{\mu^c_2}{C_1} & < & \dfrac{\mu^c_1}{C_1}+\th^c_1;\\
\dfrac{\mu^c_i}{C_{i-1}}+\dfrac{\mu^c_{i+1}}{C_{i+1}} & < & \th^c_i;\\
\dfrac{\mu^c_m}{C_{m-1}} & < & \th^c_m,\\
\end{array}
\right.
\end{equation}
then the equilibrium point is a stable one.
\end{proposition}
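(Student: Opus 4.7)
The plan is to follow the standard principle of linearised stability: write the system as $\dot x = F(x)$, show that the Jacobian $J = DF(x^*)$ has all eigenvalues with strictly negative real part, and conclude that $x^*$ is asymptotically stable. Because each $F_i$ depends only on $x_{i-1}$, $x_i$, $x_{i+1}$, the matrix $J$ is tridiagonal, so the eigenvalue localisation can be carried out in exactly the same style as Lemma 1 by invoking Gershgorin's circle theorem.

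First I would compute the non-zero entries of $J$ evaluated at $x^*$. A short calculation yields $J_{i,i+1} = (\mu^c_{i+1}/(C_iC_{i+1}))x_i^*$ and $J_{i+1,i} = (\mu^c_{i+1}/C_i)(1-x_{i+1}^*/C_{i+1})$, both of which are non-negative because $0\le x_j^* \le C_j$. Consequently the Gershgorin row-sum $J_{ii}+\sum_{j\neq i}|J_{ij}|$ coincides with the ordinary row sum $\sum_j J_{ij}$, and this simplifies dramatically: the $x^*$-dependent contributions in $J_{ii}$ partially cancel against the corresponding pieces in $J_{i,i\pm 1}$, and what remains can be bounded by $x_j^* \le C_j$ on the positive residues and by $x_j^* \ge 0$ on the negative residues. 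The resulting upper bound on the row sum for row $1$, a generic middle row $i$, and row $m$ is, respectively, $\mu^c_2/C_1 - (\mu^c_1/C_1+\th^c_1)$, $\mu^c_i/C_{i-1} + \mu^c_{i+1}/C_{i+1} - \th^c_i$, and $\mu^c_m/C_{m-1} - \th^c_m$, which are precisely the quantities appearing in (\ref{stablecont}). The hypotheses of the proposition make each of these upper bounds strictly negative, so by Gershgorin every eigenvalue of $J$ lies in the open left half-plane.

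The main obstacle is the sign bookkeeping in the tridiagonal Jacobian: the diagonal entry $J_{ii}$ carries two $x^*$-dependent corrections with opposite signs, one from differentiating the inflow $\mu^c_ix_{i-1}(C_i-x_i)/(C_{i-1}C_i)$ and the other from the outflow $\mu^c_{i+1}x_i(C_{i+1}-x_{i+1})/(C_iC_{i+1})$, and one has to pair them correctly with $J_{i,i-1}$ and $J_{i,i+1}$ so that the cancellations happen in the right places and the worst-case bounds $x_j^*\le C_j$ are applied only to the surviving positive residues. The boundary rows $i=1$ and $i=m$ have to be treated separately because one of the two bilinear pieces is absent, which is what accounts for the slightly different shape of the first and third inequalities in (\ref{stablecont}). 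Once the row-sum bounds are in hand, the asymptotic stability of the nonlinear equilibrium $x^*$ follows at once from the standard corollary of the Hartman--Grobman theorem, i.e.\ Lyapunov's first method.
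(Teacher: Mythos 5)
Your argument is exactly the paper's proof: linearise at $x^*$, apply the Gershgorin circle theorem to the tridiagonal Jacobian, use $0\le x_j^*\le C_j$ to bound the (nonnegative off-diagonal) row sums, and invoke linearised stability, treating the boundary rows separately. The one place your bookkeeping slips is row $1$: the off-diagonal entry $\mu^c_2x_1^*/(C_1C_2)$ is bounded by $\mu^c_2/C_2$ (using $x_1^*\le C_1$), while the term $\mu^c_2x_2^*/(C_1C_2)$ in the diagonal entry is bounded by $\mu^c_2/C_1$ and cancels the $-\mu^c_2/C_1$ there, so the row sum is at most $\mu^c_2/C_2-(\mu^c_1/C_1+\th^c_1)$, not $\mu^c_2/C_1-(\mu^c_1/C_1+\th^c_1)$ as you claim, and the first inequality of (\ref{stablecont}) as written does not control this quantity unless $C_2\ge C_1$. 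The paper's own displayed chain of inequalities also terminates in $\mu^c_2/C_2-(\mu^c_1/C_1+\th^c_1)<0$, so this is a mismatch between the stated hypothesis and the proof in the source itself rather than a defect of your method; aside from inheriting it, your proposal is correct and coincides with the paper's argument.
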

\begin{proof}
We consider the matrix:
$$
\left(
\begin{smallmatrix}
-\big(\frac{\mu^c_1}{C_1}+\th^c_1+ \frac{\mu^c_2}{C_1}\big)+\frac{\mu^c_2x^*_2 }{C_1C_2} & \frac{\mu^c_2x^*_1 }{C_1C_2}\cr
\frac{\mu^c_2}{C_1} -\frac{\mu^c_2x_2^* }{C_1C_2}& -\big(\th^c_2 + \frac{\mu^c_3}{C_2}\big)-\frac{\mu^c_2x_1^* }{C_1C_2} +\frac{\mu^c_3x_3^* }{C_2C_3} & \frac{\mu^c_3x_2^* }{C_2C_3} \cr
& \ddots & \ddots & \ddots\cr
&\frac{\mu^c_{m-1}}{C_{m-2}} -\frac{\mu^c_{m-1}x_{m-1}^* }{C_{m-2}C_{m-1}}& -\big(\th^c_{m-1} + \frac{\mu^c_m}{C_{m-1}}\big)-\frac{\mu^c_{m-1}x_{m-2}^* }{C_{m-2}C_{m-1}} +\frac{\mu^c_mx_m^* }{C_{m-1}C_m} & \frac{\mu^c_mx_{m-1}^* }{C_{m-1}C_m}\cr
&&\frac{\mu^c_m}{C_{m-1}}-\frac{\mu^c_mx_m^* }{C_{m-1}C_m} & -\th^c_m-\frac{\mu^c_mx_{m-1}^* }{C_{m-1}C_m}
\end{smallmatrix}\right).
$$
By applying the Gershgorin Circle Theorem \cite[p. 357]{Golub} to the matrix above, the real part of its eigenvalues are less than the maximum of
$$
\begin{array}{ll}
&\left\{
\begin{array}{l}
-\big(\dfrac{\mu^c_1}{C_1}+\th^c_1+ \dfrac{\mu^c_2}{C_1}\big)+\dfrac{\mu^c_2x^*_2 }{C_1C_2} + \dfrac{\mu^c_2x^*_1 }{C_1C_2}\\
-\big(\th^c_i + \dfrac{\mu^c_{i+1}}{C_i}\big)+\dfrac{\mu^c_i}{C_{i-1}}-\dfrac{\mu^c_ix_{i-1}^* }{C_{i-1}C_i} + \dfrac{\mu^c_{i+1}x_i^* }{C_iC_{i+1}} -\dfrac{\mu^c_ix_i^* }{C_{i-1}C_i} +\dfrac{\mu^c_{i+1}x_{i+1}^* }{C_iC_{i+1}}\\
-\th^c_m-\dfrac{\mu^c_mx_{m-1}^* }{C_{m-1}C_m} + \dfrac{\mu^c_m}{C_{m-1}}-\dfrac{\mu^c_mx_m^* }{C_{m-1}C_m}
\end{array}
\right.\\
\\
\leq &\left\{
\begin{array}{l}
-\big(\dfrac{\mu^c_1}{C_1}+\th^c_1+ \dfrac{\mu^c_2}{C_1}\big)+\dfrac{\mu^c_2 C_2 }{C_1C_2} + \dfrac{\mu^c_2 C_1 }{C_1C_2}\\
-\big(\th^c_i + \dfrac{\mu^c_{i+1}}{C_i}\big)+\dfrac{\mu^c_i}{C_{i-1}} + \dfrac{\mu^c_{i+1} C_i }{C_iC_{i+1}}  +\dfrac{\mu^c_{i+1} C_{i+1} }{C_iC_{i+1}}\\
-\th^c_m  + \dfrac{\mu^c_m}{C_{m-1}}
\end{array}
\right.\\
\\
\leq &\left\{
\begin{array}{l}
-\big(\dfrac{\mu^c_1}{C_1}+\th^c_1\big) + \dfrac{\mu^c_2 }{C_2}\\
-\th^c_i +\dfrac{\mu^c_i}{C_{i-1}} + \dfrac{\mu^c_{i+1}  }{C_{i+1}}  \\
-\th^c_m  + \dfrac{\mu^c_m}{C_{m-1}}
\end{array}
\right.
< \left\{
\begin{array}{l}
0.\\
0.\\
0.\\
\end{array}
\right.
\end{array}
$$
Therefore the equilibrium point is stable \cite{HubbardWest}.
\end{proof}

To solve for the equilibrium point, one may apply Newton's method \cite[p. 586]{Kincaid}. 
Let $(x^*_1(0),\cdots,x^*_m(0))$ be the initial guess, then the iterative scheme is
\footnotesize
$$
\begin{array}{ll}
&
\begin{pmatrix}
x^*_1(k+1)\cr
\vdots\cr
x^*_m(k+1)
\end{pmatrix}\\[5mm]
=&\begin{pmatrix}
x^*_1(k)\cr
\vdots\cr
x^*_m(k)
\end{pmatrix}\\[2mm]
&-
\left(
\begin{smallmatrix}
-\big(\frac{\mu^c_1}{C_1}+\th^c_1+ \frac{\mu^c_2}{C_1}\big)+\frac{\mu^c_2x^*_2(k)}{C_1C_2} & \frac{\mu^c_2x^*_1(k)}{C_1C_2}\cr
\frac{\mu^c_2}{C_1} -\frac{\mu^c_2x_2^*(k)}{C_1C_2}& -\big(\th^c_2 + \frac{\mu^c_3}{C_2}\big)-\frac{\mu^c_2x_1^*(k)}{C_1C_2} +\frac{\mu^c_3x_3^*(k)}{C_2C_3} & \frac{\mu^c_3x_2^*(k)}{C_2C_3} \cr
& \ddots & \ddots & \ddots\cr
&\frac{\mu^c_{m-1}}{C_{m-2}} -\frac{\mu^c_{m-1}x_{m-1}^*(k)}{C_{m-2}C_{m-1}}& -\big(\th^c_{m-1} + \frac{\mu^c_m}{C_{m-1}}\big)-\frac{\mu^c_{m-1}x_{m-2}^*(k)}{C_{m-2}C_{m-1}} +\frac{\mu^c_mx_m^*(k)}{C_{m-1}C_m} & \frac{\mu^c_mx_{m-1}^*(k)}{C_{m-1}C_m}\cr
&&\frac{\mu^c_m}{C_{m-1}}-\frac{\mu^c_mx_m^*(k)}{C_{m-1}C_m} & -\th^c_m-\frac{\mu^c_mx_{m-1}^*(k)}{C_{m-1}C_m}
\end{smallmatrix}\right)^{-1}\\
&\quad \times \begin{pmatrix}
F_1(x^*_1(k),\cdots,x^*_m(k))\cr
\vdots\cr
F_m(x^*_1(k),\cdots,x^*_m(k))
\end{pmatrix}
\end{array}
$$
\normalsize
The above iterative scheme involves finding an inverse of a tridiagonal matrix.
The inverse of the matrix can be computed by a simple algorithm presented in \cite{Lewis}.
We next present a convergence theorem for Newton's method.
\begin{proposition} \rm{(Kantorovich's Theorem \cite[p. 244]{Hubbard})} 
Let $\ba_0$ be a point in $\mathbb{R}^K$, $U$ be an open neighbourhood of $\ba_0$ in $\mathbb{R}^K$
and $F: U \to \mathbb{R}^K$ be a differentiable mapping, with its derivative $[DF(\ba_0)]$ invertible. 
Define
$$\bh_0 = -[DF(\ba_0)]^{-1}F(\ba_0),\quad \ba_1 = \ba_0 + \bh_0 \mbox{ and } U_1 = B_{|\bh_0|}(\ba_1).$$
If $\overline{U_1} \subset U$ and the derivative $[DF(\bx)]$ satisfies the Lipschitz condition
$$\|DF(\bu_1) - DF(\bu_2)\| \leq M|\bu_1-\bu_2|$$
for all points $\bu_1$ and $\bu_2 \in \overline{U_1}$ and if the inequality
$$|F(\ba)| \|DF(\ba_0)^{-1}\|^2 M \leq \dfrac{1}{2}$$
is satisfied, then the equation $F(\bx) = \b0$ has a unique solution in the closed ball $\overline{U_1}$ and Newton's method with initial
guess $\ba_0$ converges to it.
\end{proposition}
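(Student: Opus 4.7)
The plan is to run the classical Newton-Kantorovich argument: show that the Newton iterates $\ba_{k+1} = \ba_k - [DF(\ba_k)]^{-1}F(\ba_k)$ form a Cauchy sequence inside $\overline{U_1}$ whose limit is the desired zero of $F$. First I set $\al_k = \|DF(\ba_k)^{-1}\|$, $\bt_k = |F(\ba_k)|$, $\bh_k = \ba_{k+1}-\ba_k$, so that $|\bh_k|\le \al_k\bt_k$ and the hypothesis reads $\al_0^2\bt_0 M\le \tfrac{1}{2}$.

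The two workhorses are a quadratic residual bound and a perturbation bound. For the residual, the identity $F(\ba_{k+1}) = \int_0^1[DF(\ba_k+t\bh_k)-DF(\ba_k)]\bh_k\,dt$ (which uses $DF(\ba_k)\bh_k = -F(\ba_k)$) combined with the Lipschitz condition gives $\bt_{k+1}\le \tfrac{1}{2}M|\bh_k|^2$. For invertibility of $DF(\ba_{k+1})$, I factor
\[
DF(\ba_{k+1}) = DF(\ba_k)\bigl(I + DF(\ba_k)^{-1}(DF(\ba_{k+1}) - DF(\ba_k))\bigr)
\]
and apply the Banach (Neumann series) lemma: provided $\al_k M|\bh_k|<1$, this yields $\al_{k+1}\le \al_k/(1-\al_k M|\bh_k|)$.

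Combining the two, the dimensionless quantity $\ep_k := \al_k^2\bt_k M$ satisfies a recursion of the form $\ep_{k+1}\le \ep_k^2/(2(1-\ep_k)^2)$, which maps $[0,\tfrac{1}{2}]$ into itself and drives $\ep_k$ to zero quadratically. From this I extract geometric decay of $|\bh_k|$; summing the telescoping series $\sum_{k\ge 0}|\bh_k|$ simultaneously shows that $\{\ba_k\}$ is Cauchy and that its limit $\ba^*$ lies in $\overline{U_1}$. Continuity of $F$ then yields $F(\ba^*)=\b0$. Uniqueness in $\overline{U_1}$ follows from the mean-value identity $F(\ba^{**})-F(\ba^*) = \int_0^1 DF(\ba^*+s(\ba^{**}-\ba^*))(\ba^{**}-\ba^*)\,ds$ together with the bound on $\|DF\|$ on $\overline{U_1}$, which forces $\ba^{**}=\ba^*$.

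The main obstacle is purely the bookkeeping: I need to verify inductively that $\ep_k\le \tfrac{1}{2}$ (so the Banach step remains legal at every iteration) and that each successive ball $B_{|\bh_k|}(\ba_{k+1})$ nests inside $\overline{U_1}$, so that the Lipschitz hypothesis is available throughout. Once the induction on $\ep_k$ is set up cleanly the rest of the argument is mechanical; everything of interest is encoded in the scalar recursion $\ep_{k+1}\le \ep_k^2/(2(1-\ep_k)^2)$.
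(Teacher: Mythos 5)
The paper does not actually prove this proposition: it is Kantorovich's theorem, imported verbatim with a citation to Hubbard and Hubbard, so there is no in-paper argument to compare against. Your sketch is the standard Newton--Kantorovich proof (quadratic residual bound from the integral identity, Banach/Neumann perturbation bound for $DF(\ba_{k+1})^{-1}$, and the scalar recursion for $\epsilon_k=\alpha_k^2\beta_k M$), and that is the right skeleton --- essentially the argument in the cited reference.

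Two concrete points must be repaired before the sketch is a proof. First, the recursion $\epsilon_{k+1}\le \epsilon_k^2/\bigl(2(1-\epsilon_k)^2\bigr)$ has $\tfrac12$ as a fixed point, so in the borderline case $\epsilon_0=\tfrac12$ (which the hypothesis $|F(\ba_0)|\,\|DF(\ba_0)^{-1}\|^2M\le\tfrac12$ permits) it does \emph{not} drive $\epsilon_k$ to zero, and convergence is not quadratic there; your claim that the recursion ``drives $\ep_k$ to zero quadratically'' fails at the endpoint. The fix is to work instead with the step-ratio bound $|\bh_{k+1}|\le \tfrac{\epsilon_k}{2(1-\epsilon_k)}\,|\bh_k|\le\tfrac12|\bh_k|$, which holds for all $\epsilon_k\le\tfrac12$ and still yields geometric decay, $\sum_{k\ge 1}|\bh_k|\le|\bh_0|$, a Cauchy sequence, and a limit inside $\overline{U_1}$. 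Second, the uniqueness step is underspecified: writing $0=A(\ba^{**}-\ba^{*})$ with $A=\int_0^1 DF\bigl(\ba^{*}+s(\ba^{**}-\ba^{*})\bigr)\,ds$, the natural estimate is $\|DF(\ba_0)^{-1}A-I\|\le \alpha_0 M\cdot 2|\bh_0|\le 2\epsilon_0\le 1$, which is not strictly less than $1$ in the borderline case, so invertibility of $A$ does not follow from ``the bound on $\|DF\|$'' alone. One needs an extra observation --- for instance, strict convexity of the Euclidean ball, so that two distinct points of $\overline{U_1}$ cannot both lie at distance $2|\bh_0|$ from $\ba_0$, which makes the integral bound strict --- to conclude $A$ is invertible and hence $\ba^{**}=\ba^{*}$. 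With these two repairs your argument is complete and matches the textbook proof the paper relies on.
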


\begin{remark}
If we set $\ba=(0,\cdots,0)^t$ as the initial guess then it can be shown that we can take (see \rm{\cite[p. 240]{Hubbard}})
$$
M^2=4\dsum_{i=2}^m \Big( \dfrac{\mu^c_i}{C_{i-1}C_i} \Big)^2.
$$
Moreover, we have
$$
|F(\ba)|^2=(\mu^{c2}_1+\la^{c2}).
$$
If condition (\ref{stablecont}) is satisfied, $DF(\ba)$ is strictly diagonally dominant.
Hence, we have (see \cite{Moraca})
$$
\| DF(\ba)^{-1} \|\leq \dfrac{\sqrt{m}}{\min\Big\{ \frac{\mu^c_1}{C_1}+\th^c_1+ \frac{\mu^c_2}{C_1}, \th^c_2 + \frac{\mu^c_3}{C_2}-\frac{\mu^c_2}{C_1}, \cdots,
\th^c_{m-1} + \frac{\mu^c_m}{C_{m-1}}-\frac{\mu^c_{m-1}}{C_{m-2}}, \th^c_m - \frac{\mu^c_m}{C_{m-1}}\Big\}}.
$$
Thus, by Proposition 7, a sufficient condition for Newton's method to be convergent with the initial guess $\ba$ is
$$
\dfrac{m^2(\mu^{c2}_1+\la^{c2})\sum_{i=2}^m \Big( \frac{\mu^c_i}{C_{i-1}C_i} \Big)^2 }{\Big[\min\Big\{ \frac{\mu^c_1}{C_1}+\th^c_1+ \frac{\mu^c_2}{C_1}, \th^c_2 + \frac{\mu^c_3}{C_2}-\frac{\mu^c_2}{C_1}, \cdots,
\th^c_{m-1} + \frac{\mu^c_m}{C_{m-1}}-\frac{\mu^c_{m-1}}{C_{m-2}}, \th^c_m - \frac{\mu^c_m}{C_{m-1}}\Big\}\Big]^4} \leq \dfrac{1}{16}.
$$
\end{remark}

\begin{figure} 
\centering 
\begin{tikzpicture} [domain=0:20,scale=0.5] 
\node[minimum size=5mm,draw,circle] (y11) at (0,0) {};
\node[minimum size=5mm,draw,circle] (y12) at (3,0) {};
\node[minimum size=5mm,draw,circle] (y13) at (6,0) {};
\node[minimum size=5mm,draw,circle] (y14) at (9,0) {};
\node[minimum size=5mm,draw,circle] (y15) at (12,0) {};

\draw[<->,very thick] (y11) -- (y12);
\draw[<->,very thick] (y12) -- (y13);
\draw[<->,very thick] (y13) -- (y14);
\draw[<->,very thick] (y14) -- (y15);
\draw[<->,very thick] (y11) edge [bend right=15] (y13);
\draw[<->,very thick] (y11) edge [bend right=18] (y14);
\draw[<->,very thick] (y11) edge [bend right=21] (y15);
\draw[<->,very thick] (y12) edge [bend right=15] (y14);
\draw[<->,very thick] (y12) edge [bend right=18] (y15);
\draw[<->,very thick] (y13) edge [bend right=15] (y15);

\draw (-1,-2) -- (13,-2) -- (13, 1) -- (-1,1) -- (-1,-2);

\node[minimum size=5mm,draw,circle] (y21) at (0,4) {};
\node[minimum size=5mm,draw,circle] (y22) at (3,4) {};
\node[minimum size=5mm,draw,circle] (y23) at (6,4) {};
\node[minimum size=5mm,draw,circle] (y24) at (9,4) {};
\node[minimum size=5mm,draw,circle] (y25) at (12,4) {};

\draw[<->,very thick] (y21) -- (y22);
\draw[<->,very thick] (y22) -- (y23);
\draw[<->,very thick] (y23) -- (y24);
\draw[<->,very thick] (y24) -- (y25);
\draw[<->,very thick] (y21) edge [bend right=15] (y23);
\draw[<->,very thick] (y21) edge [bend right=18] (y24);
\draw[<->,very thick] (y21) edge [bend right=21] (y25);
\draw[<->,very thick] (y22) edge [bend right=15] (y24);
\draw[<->,very thick] (y22) edge [bend right=18] (y25);
\draw[<->,very thick] (y23) edge [bend right=15] (y25);

\draw (-1,2) -- (13,2) -- (13, 5) -- (-1,5) -- (-1,2);

\node[minimum size=5mm,draw,circle] (y31) at (0,8) {};
\node[minimum size=5mm,draw,circle] (y32) at (3,8) {};
\node[minimum size=5mm,draw,circle] (y33) at (6,8) {};
\node[minimum size=5mm,draw,circle] (y34) at (9,8) {};
\node[minimum size=5mm,draw,circle] (y35) at (12,8) {};

\draw[<->,very thick] (y31) -- (y32);
\draw[<->,very thick] (y32) -- (y33);
\draw[<->,very thick] (y33) -- (y34);
\draw[<->,very thick] (y34) -- (y35);
\draw[<->,very thick] (y31) edge [bend right=15] (y33);
\draw[<->,very thick] (y31) edge [bend right=18] (y34);
\draw[<->,very thick] (y31) edge [bend right=21] (y35);
\draw[<->,very thick] (y32) edge [bend right=15] (y34);
\draw[<->,very thick] (y32) edge [bend right=18] (y35);
\draw[<->,very thick] (y33) edge [bend right=15] (y35);

\draw (-1,6) -- (13,6) -- (13, 9) -- (-1,9) -- (-1,6);

\node[minimum size=5mm,draw,circle] (y41) at (0,12) {};
\node[minimum size=5mm,draw,circle] (y42) at (3,12) {};
\node[minimum size=5mm,draw,circle] (y43) at (6,12) {};
\node[minimum size=5mm,draw,circle] (y44) at (9,12) {};
\node[minimum size=5mm,draw,circle] (y45) at (12,12) {};

\draw[<->,very thick] (y41) -- (y42);
\draw[<->,very thick] (y42) -- (y43);
\draw[<->,very thick] (y43) -- (y44);
\draw[<->,very thick] (y44) -- (y45);
\draw[<->,very thick] (y41) edge [bend right=15] (y43);
\draw[<->,very thick] (y41) edge [bend right=18] (y44);
\draw[<->,very thick] (y41) edge [bend right=21] (y45);
\draw[<->,very thick] (y42) edge [bend right=15] (y44);
\draw[<->,very thick] (y42) edge [bend right=18] (y45);
\draw[<->,very thick] (y43) edge [bend right=15] (y45);

\draw (-1,10) -- (13,10) -- (13, 13) -- (-1,13) -- (-1,10);

\draw[->] (6,-2) -- (6, -3);
\draw[->] (6,2) -- (6, 1);
\draw[->] (6,6) -- (6, 5);
\draw[->] (6,10) -- (6, 9);
\draw[->] (6,14) -- (6,13);

\draw[dashed,->] (13,-0.5) -- (14.5, -0.5);
\draw[dashed,->] (13,3.5) -- (14.5, 3.5);
\draw[dashed,->] (13,7.5) -- (14.5, 7.5);
\draw[dashed,->] (13,11.5) -- (14.5, 11.5);

\end{tikzpicture} 
\caption{A 4 echelon inventory system.}
\end{figure}
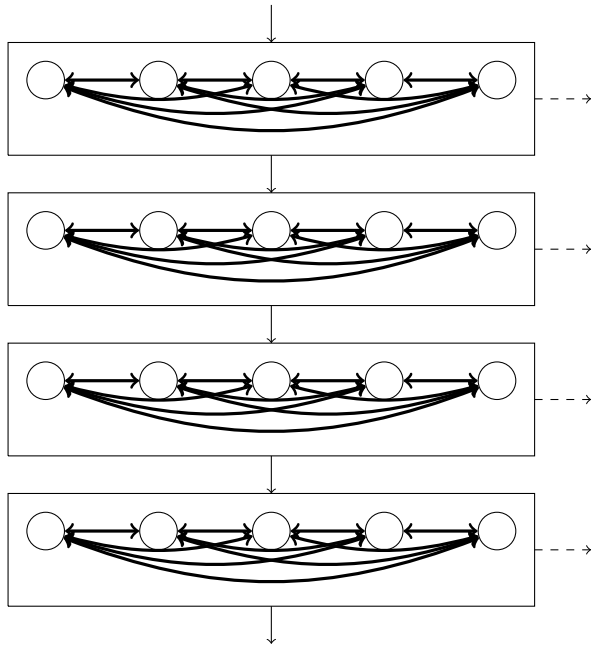 

\subsection{Illustrative example}
In this subsection, we illustrate the use of the proposed model by the following example.
Suppose that there are $4$ echelons in an inventory system and each echelon consists of $5$ warehouses, see Figure 5.
Within each echelon, there are lateral transshipment between the $5$ warehouses.
We assume all lateral transshipment rates are $\ga_{ij}=1$ and $L_i=20$ for all warehouses.
Within each echelon, the maximum supply rates and demand rates are the same.
In each echelon, the deterioration percentage in warehouse $i$ is given by
$$
\th_i=i \times 0.05 \quad \mbox{for } i=1,2,\cdots,5.
$$
Suppose that
$$
\mu^c_1=50,\ \ \mu^c_2=45,\ \ \mu^c_3=40,\ \ \mu^c_4=30 \mbox{ and } \la^c=5,
$$

Suppose we would like to find the inventory level in equilibrium for the 4th warehouse in the 3rd echelon.
If the whole inventory system is modelled then there are $4 \times 5=20$ states of inventory levels to be handled.
In what follows, we propose a two-phase procedure for finding the inventory level of a warehouse in a particular echelon.
 
\begin{itemize}
	\item Phase 1: We first aggregate the warehouses in each echelon to one aggregated warehouse, which means 
	$$C_i=5 \times 20 = 100 \quad \mbox{and} \quad\th^c_i=\dfrac{1}{5}(0.05+0.1+\cdots+0.25)=0.15.$$
	We then apply Newton's method to find the inventory level in equilibrium for each echelon. 
	The initial guess is set to be $\ba=(0,0,0,0)$ and the shopping criterion is the following
	$$
	\| F(x^*_1,\cdots,x^*_m) \|_2 \leq 10^{-10}.
	$$ 
	Newton's method converges in a few steps and the result is
	$$
	(x^*_1,x^*_2,x^*_3,x^*_4)= (53.0, 34.6, 24.9, 11.0).
	$$
	
	\item Phase 2: We focus on the 3rd echelon. The maximum supply rate at each warehouse depends on the inventory levels in the 2nd echelon.
	Here we use the result from Phase 1 and set	
	$$
	\mu_i=8\times\dfrac{34.6/5}{20}=2.77
	$$
	by using the average inventory level in the 2nd echelon.
	The demand rate at each warehouse depends on the inventory levels in the 3rd and 4th echelons.
  Here we use the result from Phase 1 and set	
  $$
	\la_i=6\times\dfrac{24.9/5}{20}\times\dfrac{20-11.0/5}{20}=1.33
	$$
	by using the average inventory level in the 3rd and 4th echelon.
	
	Solving the equilibrium point by the method in Section 2.2, we obtain
	$$
	(y^*_1,y^*_2,y^*_3,y^*_4,y^*_5)=(6.2,    5.6,    5.1,   4.6,    4.3).
	$$
Hence,	the 4th warehouse in the 3rd echelon has equilibrium inventory level equals $4.6$.
\end{itemize}
By the above procedure, we only need to handle $4+5=9$ states of inventory levels.

\subsection{Numerical examples}
In this subsection, we present some numerical examples following the illustrative example in Section 3.1.
In all numerical tests, we assume $C_i = 100$ for all echelons and within each echelon, the maximum inventory levels,
maximum supply rates and demand rates are the same.

For the numerical tests in Tables 6-8, we consider $m=2,4,8$ echelons in the system and each echelon consists of $n=2,4,8$ warehouses.
In each table, we give the maximum supply rate for each echelon.
In all cases the lowest echelon is subject to a total demand $\la^c=5$.
For each echelon the deterioration percentage in warehouse $i$ is given by
$$
\th_i=i \times 0.04 \quad \mbox{for } i=1,2,\cdots,n
$$
and all lateral transshipment rates are $\ga_{ij} = 1$ for $i \neq j$.
In each case, we first give the total inventory levels in equilibrium  at each echelon in the first column.
For each echelon, we then give the inventory levels in equilibrium  at each warehouses. 
We observe that when $m$ increases, the inventory levels at the lowest echelon in equilibrium are decreased and the changes become more significant when $n$ increases.
For each fixed $m$, when we increase the number of warehouses in each echelon, the total inventory levels in equilibrium  in each echelon decrease. This suggests that to reduce the inventory cost, one may consider to build more warehouses in each echelon.

\begin{table} \centering{\scriptsize
\begin{tabular}{ll|ll|ll}
\hline
\multicolumn{6}{c}{$m=2,\quad (\mu^c_1,\mu^c_2)=(50,30)$}\\
\hline
$n=2$&&$n=4$&&$n=8$\\
\hline
73.8	&(38.2	35.7)	&66.7&	(18.1 	17.1 	16.3  15.5)&	57.0	&(8.0 	7.8  7.5 	7.3  7.1 	6.9  6.7 	6.5)\\
60.9	&(32.5	28.7)	&50.0&	(14.4 	13.1  12.0  11.1)&	34.5	&(5.1 	4.9  4.7 	4.5  4.3 	4.1  4.0 	3.8)\\
\hline
\end{tabular}
\caption{The equilibrium points of the inventory level when $m=2$.}

\begin{tabular}{ll|ll|ll}
\hline
\multicolumn{6}{c}{$m=4,\quad (\mu^c_1,\cdots,\mu^c_4)=(90,70,50,30)$}\\
\hline
$n=2$&&$n=4$&&$n=8$\\
\hline
76.6&	(39.1	37.6)&	69.9&	(18.4	17.8	17.2	16.6)	&61.8&	(8.4	8.2	8.0	7.8	7.6	7.5	7.3	7.2)\\
69.4&	(35.8	33.6)&	58.8&	(16.0	15.1	14.3	13.6)	&46.3&	(6.5	6.3	6.1	5.9	5.7	5.5	5.4	5.2)\\
64.6&	(33.8	30.9)&	51.5&	(14.5	13.4	12.5	11.7)	&35.5&	(5.2	5.0	4.8	4.6	4.4	4.2	4.1	4.0)\\
56.7&	(30.4	26.5)&	41.1&	(12.1	10.9	9.9	  9.0)	  &19.7&	(3.0	2.8	2.7	2.6	2.5	2.4	2.3	2.2)\\
\hline
\end{tabular}
\caption{The equilibrium points of the inventory level when $m=4$.}

\begin{tabular}{ll|ll|ll}
\hline
\multicolumn{6}{c}{$m=8,\quad (\mu^c_1,\cdots,\mu^c_8)=(170,150,130,110,90,70,50,30)$}\\
\hline
$n=2$&&$n=4$&&$n=8$\\
\hline
78.0&	(39.4	38.6)&	71.8&	(18.5	18.1	17.8	17.4)	&65.4&	(8.7	8.5	8.4	8.2	8.1	8.0	7.9	7.7)\\
72.0&	(36.6	35.5)&	62.1&	(16.3	15.8	15.3	14.9)	&52.0&	(7.1	6.9	6.7	6.6	6.4	6.3	6.2	6.0)\\
69.7&	(35.5	34.2)&	57.2&	(15.1	14.6	14.0	13.5)	&44.1&	(6.1	5.9	5.8	5.6	5.5	5.3	5.2	5.1)\\
68.4&	(35.0	33.4)&	54.1&	(14.5	13.8	13.2	12.7)	&38.6&	(5.4	5.2	5.1	4.9	4.8	4.6	4.5	4.4)\\
67.3&	(34.6	32.7)&	51.8&	(14.1	13.3	12.6	12.0)	&34.2&	(4.9	4.7	4.5	4.4	4.2	4.1	4.0	3.8)\\
65.8&	(34.1	31.8)&	49.6&	(13.7	12.8	12.0	11.3)	&30.1&	(4.4	4.2	4.0	3.9	3.7	3.6	3.5	3.4)\\
63.1&	(33.1	30.2)&	46.2&	(13.1	12.1	11.2	10.4)	&25.1&	(3.7	3.6	3.4	3.3	3.1	3.0	2.9	2.8)\\
55.9&	(30.0	26.2)&	37.1&	(11.0	9.9	8.9	8.1)	&9.9&	(1.5	1.4	1.4	1.3	1.2	1.2	1.1	1.1)\\
\hline
\end{tabular}
\caption{The equilibrium points of the inventory level when $m=8$.}
}\end{table}

In Tables 9-11, we assume $m=4$ and $n=4$ with $\mu^c_1=90, \mu^c_2=70, \mu^c_3=50, \mu^c_4=30$ and $\la^c=5$.
We calculate the equilibrium points for the following nine cases where in each echelon
$$\th_i=i \times 0.02, \quad \th_i=i \times 0.04 \quad \th_i=i \times 0.08,
$$
and all lateral transshipment rates are 
$\ga_{ij} = 0.5,1,2$ for $i \neq j$.
We observe that the total inventory levels in equilibrium  at lower echelons are more sensitive to the change of $\th_i$. 
For each fixed $\th_i$, when $\ga_{ij}$ is increased, the less variation of the inventory in equilibrium among the warehouses is observed. The reason for this is when $\ga_{ij}$ is increased, the inventory sharing between warehouses are more active.

\begin{table} \centering{\scriptsize
\begin{tabular}{l|lll}
\hline
&$\ga_{ij} = 0.5$ &$\ga_{ij} = 1$ &$\ga_{ij} = 2$  \\
\hline
78.8&	(20.3	19.9	19.5	19.2)&	(20.3	19.9	19.5	19.2)&	(20.2	19.9	19.6	19.3)\\
72.6&	(19.0	18.4	17.9	17.4)&	(18.9	18.4	17.9	17.5)&	(18.8	18.4	18.0	17.6)\\
68.4&	(18.2	17.5	16.8	16.1)&	(18.1	17.4	16.8	16.3)&	(17.9	17.4	16.9	16.4)\\
60.8&	(16.7	15.7	14.8	14.0)&	(16.4	15.6	14.9	14.2)&	(16.1	15.5	15.0	14.5)\\
\hline
\end{tabular}
\caption{The equilibrium points of the inventory level when $\th_i=i \times 0.02$.}

\begin{tabular}{l|lll}
\hline
&$\ga_{ij} = 0.5$ &$\ga_{ij} = 1$ &$\ga_{ij} = 2$  \\
\hline
69.9&	(18.5	17.8	17.1	16.5)&	(18.4	17.8	17.2	16.6)&	(18.3	17.7	17.2	16.7)\\
58.8&	(16.2	15.2	14.3	13.5)&	(16.0	15.1	14.3	13.6)&	(15.8	15.1	14.4	13.8)\\
51.5&	(14.8	13.5	12.4	11.5)&	(14.5	13.4	12.5	11.7)&	(14.1	13.3	12.6	11.9)\\
41.1&	(12.6	11.0	9.7	8.8)&	  (12.1	10.9	9.9	9.0)   &	(11.6	10.7	10.0	9.4)\\
\hline
\end{tabular}
\caption{The equilibrium points of the inventory level when $\th_i=i \times 0.04$.}

\begin{tabular}{l|lll}
\hline
&$\ga_{ij} = 0.5$ &$\ga_{ij} = 1$ &$\ga_{ij} = 2$  \\
\hline
60.3&	(16.8	15.6	14.6	13.7)&	(16.7	15.6	14.6	13.8)&	(16.5	15.5	14.7	13.9)\\
44.1&	(13.3	11.7	10.4	9.4)&	(13.1	11.7	10.5	9.6)&	(12.7	11.6	10.6	9.8)\\
32.8&	(10.9	9.0	7.6	6.7)&	(10.4	8.9	7.7	6.9)&	(9.9	8.8	7.9	7.1)\\
16.2&	(6.0	4.6	3.7	3.1)&	(5.6	4.5	3.8	3.3)&	(5.2	4.4	3.9	3.5)\\
\hline
\end{tabular}
\caption{The equilibrium points of the inventory level when $\th_i=i \times 0.08$.}
}\end{table}


\section{Concluding remarks}
In this paper, we propose a continuous time model for a multi-echelon inventory system with deteriorating items.
Lateral transshipment is allowed with rate depends on the inventory levels of the corresponding warehouses.
A fast procedure based on Newton's method is developed for finding the equilibrium points of the system.
Numerical results indicate that the method is efficient.

For future research, one may consider reverse logistics. 
Returned products are collected and stored at lower echelons and transported to upper echelons for rework.
Another direction is to consider minimization of operation costs of the
system by including costs associated with the normal delivery, lateral
transshipment and inventory costs at all the warehouses.

\end{document}